\title{Combinatorial Insights on Tensor Power Decomposition in  $U_q(\mathfrak{sl}_2)-$module}
\author{Vinit Sinha}
\email{vinitsinha21feb@gmail.com}
\keywords{Quantum groups, Tilting modules, Catalan paths, Dyck paths, Quantised enveloping algebra}
\subjclass[2020]{17B37, 20G42}
\date{}
\begin{document}
\newtheorem{defn}{Definition}[section]
\newtheorem{definitions}[defn]{Definitions}
\newtheorem{lem}[defn]{Lemma}
\newtheorem{construction}[defn]{Construction}
\newtheorem{prop}[defn]{Proposition}
\newtheorem*{prop*}{Proposition}
\newtheorem{thm}[defn]{Theorem}
\newtheorem{cor}[defn]{Corollary}
\newtheorem{claim}{Claim}[defn]
\newtheorem*{claim*}{Claim}
\newtheorem{algo}[defn]{Algorithm}
\theoremstyle{remark}
\newtheorem{rem}[defn]{Remark}
\theoremstyle{remark}
\newtheorem{remarks}[defn]{Remarks}
\theoremstyle{remark}
\newtheorem{notation}[defn]{Notation}
\theoremstyle{remark}
\newtheorem{exmp}[defn]{Example}
\theoremstyle{remark}
\newtheorem{examples}[defn]{Examples}
\theoremstyle{remark}
\newtheorem{dgram}[defn]{Diagram}
\theoremstyle{remark}
\newtheorem{fact}[defn]{Fact}
\theoremstyle{remark}
\newtheorem{illust}[defn]{Illustration}
\theoremstyle{remark}
\newtheorem{que}[defn]{Question}
\numberwithin{equation}{section}

\newcommand{\N}{\mathbb{N}} 
\newcommand{\R}{\mathbb{R}}
\newcommand{\C}{\mathbb{C}}
\newcommand{\Z}{\mathbb{Z}}
\newcommand{\SL}{\mathfrak{sl}_{2}}
\newcommand\UU{\mathcal{U}}

\maketitle
\allowdisplaybreaks

\begin{abstract}
    Let $V(1)$ be the natural representation of $U(\SL).$ The multiplicities of $V(k)$ in $V(1)^{\otimes N}$ have multiple interpretations in combinatorics. In this paper, we investigate one such combinatorial interpretation of their multiplicities. Furthermore, we extend our analysis to the two-dimensional representation $T(1)$ of the restricted quantum enveloping algebra $U_q^{\mathrm{res}}(\SL)$. By employing combinatorial techniques, this study aims to elucidate the multiplicity of $T(k)$ within $T(1)^{\otimes N}$ while also offering a comprehensive analysis of the asymptotic behavior of these multiplicities as the parameter $N$ approaches infinity.
\end{abstract}

\section{Introduction}

Quantum groups, first introduced by Drinfeld\cite{drinfeld1986quantum} and Jimbo\cite{jimbo1985q}, are a class of Hopf algebra which arise as a deformation to the quantized universal enveloping algebra. Quantum groups have become a fundamental topic in modern mathematical physics and representation theory providing a fascinating generalization of classical Lie algebras and enveloping algebras, exhibiting intriguing algebraic and geometric structures. 

In this paper, we delve into the representation theory of one parameter deformation of the universal enveloping algebra $U(\SL)$, focusing on the combinatorial aspects of the tensor power decomposition for tilting modules\cite[\S~11.3]{chari1995guide} of $U_q^{\mathrm{res}} (\SL)$\cite[\S~9.3]{chari1995guide}. Specifically, we investigate the multiplicity of $T(k)$ in the tensor product $T(1)^{\otimes N}$, where $T(k)$ is the 
unique (up to isomorphism) indecomposable tilting module of maximal weight $k$\cite[\S~11.3.4]{chari1995guide}.

\begin{figure}[H]
\centering
\begin{subfloat}[Example 1] {
\label{ex:1}
\centering
    \begin{tikzpicture}
   
            \draw[very thin,color=gray!15,step=1] (0,0) grid (5,5);
        \draw[-latex] (0,0) -- (5.2,0) node[right] { };
        \draw[-latex] (0,0) -- (0,5.2) node[above] {};
        \coordinate [label=below left:O] (a) at (0,0);
        \foreach \i in {1,2,...,5}
        \draw[gray!65] (\i,.1)--(\i,-.1) node[below] {$\i$};
        \foreach \i in {1,2,3,4,5}
        \draw[gray!65] (.1,\i)--(-.1,\i) node[left] {$\i$};
        \draw[thick] (0,0)--(4,4){};
        \draw[thick] (4,4)--(5,3){};
    \end{tikzpicture}
}
\end{subfloat}
\hspace{10pt}       
\begin{subfloat}[Example 2] {
\centering
    \begin{tikzpicture}
    
        \draw[very thin,color=gray!15,step=1] (0,0) grid (5,5);
        \draw[-latex] (0,0) -- (5.2,0) node[right] { };
        \draw[-latex] (0,0) -- (0,5.2) node[above] {};
        \coordinate [label=below left:O] (a) at (0,0);
        \foreach \i in {1,2,...,5}
        \draw[gray!65] (\i,.1)--(\i,-.1) node[below] {$\i$};
        \foreach \i in {1,2,3,4,5}
        \draw[gray!65] (.1,\i)--(-.1,\i) node[left] {$\i$};
        \draw[thick] (0,0)--(1,1){};
        \draw[thick] (1,1)--(2,0){};
        \draw[thick] (2,0)--(3,1){};
        \draw[thick] (3,1)--(4,0){};
        \draw[thick] (4,0)--(5,1){};
    \end{tikzpicture}
}     
\end{subfloat}
\caption*{Catalan paths}

\end{figure}

We first start by looking at the tensor product decomposition of the irreducible representation of $U(\SL)$, specifically looking at the multiplicities of irreducible representations in the direct sum decomposition of $V(1)^{\otimes N}$ where $V(1)$ is the unique $2-$dimensional irreducible representation of $U(\SL)$. We have established an alternate method of finding $t(k,N)$ ($t(k,N)$ is defined as the multiplicity of $V(k)$ in $V(1)^{\otimes N}$) using the enumeration of Catalan paths and Dyck paths\cite[\S~10.8]{bona2015handbook}. Then we have further shown that $t(0,N)$ is equal to the enumeration of all the \emph{noncrossing (complete) matching on $N-$vertices}.

\begin{figure}[H]
    \centering
\begin{subfloat} {
\label{ex:1}
\centering
    \begin{tikzpicture}
            \foreach \i in {1,2,...,6}
        \draw[black!65] (\i-1,.1)--(\i-1,-.1) node[below] {$\i$};
            \draw[very thin](0,0)--(5,0){};
            \draw[very thick] (1,0) arc (0:180:0.5);
            \draw[very thick] (3,0) arc (0:180:0.5);
            \draw[very thick] (5,0) arc (0:180:0.5);
    \end{tikzpicture}
}
\end{subfloat}
\hspace{10pt}       
\begin{subfloat} {
\begin{tikzpicture}
            \foreach \i in {1,2,...,6}
        \draw[black!65] (\i-1,.1)--(\i-1,-.1) node[below] {$\i$};
            \draw[very thin](0,0)--(5,0){};
            \draw[very thick] (1,0) arc (0:180:0.5);
            \draw[very thick] (5,0) arc (0:180:1.5);
            \draw[very thick] (4,0) arc (0:180:0.5);
    \end{tikzpicture}
}     
\end{subfloat}
\hspace{10pt}       
\begin{subfloat} {
\begin{tikzpicture}
            \foreach \i in {1,2,...,6}
        \draw[black!65] (\i-1,.1)--(\i-1,-.1) node[below] {$\i$};
            \draw[very thin](0,0)--(5,0){};
            \draw[very thick] (3,0) arc (0:180:0.5);
            \draw[very thick] (5,0) arc (0:180:2.5);
            \draw[very thick] (4,0) arc (0:180:1.5);
    \end{tikzpicture}
}     
\end{subfloat}
\hspace{10pt}       
\begin{subfloat} {
\begin{tikzpicture}
            \foreach \i in {1,2,...,6}
        \draw[black!65] (\i-1,.1)--(\i-1,-.1) node[below] {$\i$};
            \draw[very thin](0,0)--(5,0){};
            \draw[very thick] (2,0) arc (0:180:0.5);
            \draw[very thick] (5,0) arc (0:180:2.5);
            \draw[very thick] (4,0) arc (0:180:0.5);
    \end{tikzpicture}
}     
\end{subfloat}
\caption*{Examples of noncrossing matching}
\end{figure}
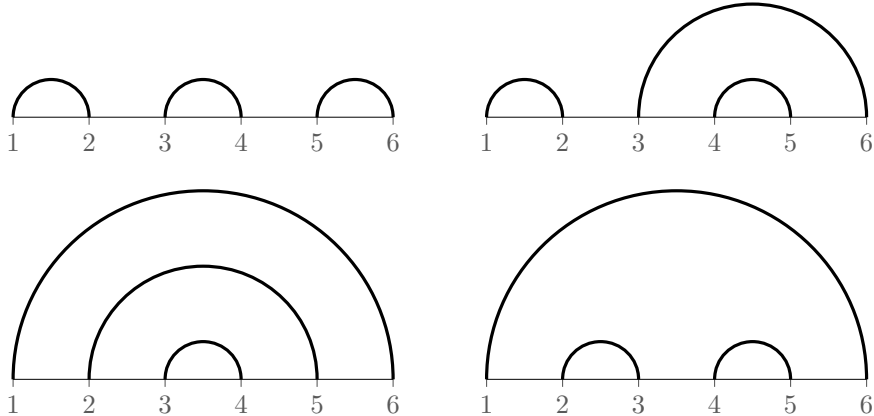

We further delve into our analysis by exploring the quantum enveloping algebra $U_q(\mathfrak{sl_2})$. In particular, our focus lies on the tilting modules within the restricted specialization of this algebra, denoted as $U_q^{\mathrm{res}}(\mathfrak{sl}_2)$. Lachowska et al.\cite{anna2024multiplicities} have given an explicit formula for the direct sum decomposition of the tensor power of indecomposable tilting modules. We have used that result to find the coefficient of tilting modules in the direct sum decomposition of $T(1)^{\otimes N}$. We have also established a correspondence between the coefficient of $T(k)$ in $T(1)^{\otimes N}$ with the enumeration of some particular class of Catalan paths. We then note a relation between \emph{tridiagonal Toeplitz matrices} \cite{noschese2013tridiagonal} and coefficients of the direct sum decomposition in $T(1)^{\otimes N}$. Using that relation, we came up with the following result.

\begin{prop*}
    Let $p$ be a function such that $T(1)^{\otimes N} = \bigoplus\limits_{k \in \Z_{\geq 0}} T(k)^{\oplus p(k,N)}$. Then for any $k \in \{0,1,\cdots , l-2\}$,

    $$ p(k,N) = \frac{2^{N+1}}{l}\sum_{s=1}^{l-1} \cos^N\left( \frac{s\pi}{l}\right)\sin\left(\frac{s\pi}{l} \right)\sin\left(\frac{s(k+1)\pi}{l}\right) \text{ for every $N \in \N$}. $$
\end{prop*}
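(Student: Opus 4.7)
The plan is to encode the multiplicities $\bigl(p(k,N)\bigr)_{k=0}^{l-2}$ as the components of a vector in $\R^{l-1}$ that evolves linearly in $N$, identify the transfer matrix as a symmetric tridiagonal Toeplitz matrix of order $l-1$, and then apply the classical closed-form spectral decomposition of such matrices in terms of sines and cosines.

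The first step would be to extract a linear recursion from the tilting tensor product rules for $U_q^{\mathrm{res}}(\SL)$. Writing $T(1)^{\otimes(N+1)} = T(1)^{\otimes N} \otimes T(1)$ and using $T(k)\otimes T(1) = T(k-1) \oplus T(k+1)$ for $0 \le k \le l-2$ (with the convention $T(-1) := 0$), together with the fact that $T(l-1)\otimes T(1)$ is projective and indecomposable---isomorphic to $T(l) = P(l-2)$---and so admits no $T(j)$ with $j \le l-2$ as a direct summand, I would read off
\begin{align*}
p(0,N+1) &= p(1,N), \\
p(k,N+1) &= p(k-1,N) + p(k+1,N) \quad \text{for } 1 \le k \le l-3, \\
p(l-2,N+1) &= p(l-3,N),
\end{align*}
with initial data $v_0 := \bigl(p(0,0),\ldots,p(l-2,0)\bigr)^T = e_0$ since $T(1)^{\otimes 0} = T(0)$.

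Packaging this as $v_{N+1} = M v_N$, the matrix $M$ is exactly the $(l-1)\times(l-1)$ symmetric tridiagonal Toeplitz matrix with zeros on the main diagonal and ones on the sub- and super-diagonals. By the classical theory \cite{noschese2013tridiagonal}, $M$ has eigenvalues $\lambda_s = 2\cos(s\pi/l)$ for $s = 1,\ldots,l-1$ and an orthonormal eigenbasis $\{u_s\}$ with entries $(u_s)_k = \sqrt{2/l}\,\sin\bigl((k+1)s\pi/l\bigr)$. Expanding the initial vector as $e_0 = \sum_s (u_s)_0\, u_s$ and applying $M^N$ would then give
$$ p(k,N) = (M^N e_0)_k = \sum_{s=1}^{l-1} \lambda_s^N (u_s)_0 (u_s)_k = \frac{2^{N+1}}{l}\sum_{s=1}^{l-1} \cos^N\!\left(\frac{s\pi}{l}\right)\sin\!\left(\frac{s\pi}{l}\right)\sin\!\left(\frac{s(k+1)\pi}{l}\right), $$
which is the claimed formula.

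The principal obstacle lies in justifying the boundary behaviour at $k = l-2$: one must confirm that $T(l-1)\otimes T(1)$ produces no summand $T(j)$ with $j \le l-2$. This hinges on the fact that $T(l-1)$ is simple and projective at a root of unity, so its tensor with $T(1)$ is a projective indecomposable tilting module of highest weight $l$; a dimension count forces it to be $P(l-2) = T(l)$, which is neither isomorphic to nor contains any $T(j)$ with $j \le l-2$ as a direct summand. Once this boundary issue is settled, the remaining spectral argument is entirely routine.
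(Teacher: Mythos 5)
Your proposal is correct and takes essentially the same route as the paper: the recurrence you derive is exactly the one in Corollary \ref{cor: recurrence for p(k,N)}, the transfer matrix is the same $(l-1)\times(l-1)$ tridiagonal Toeplitz matrix used in Proposition \ref{prop Relation between p(k,N) and T} and Corollary \ref{cor: Relation between p(k,N) and T}, and the closing spectral computation matches Proposition \ref{prop: p(k,N) in terms of sine and cosines} via Lemma \ref{lem: eigenvectors and eigenvalues of T}. The only substantive difference is in how the recurrence is established: you read it directly off the tensor product rules of Proposition \ref{recurrence for tilting modules}, whereas the paper derives it from the explicit multiplicity formula in Theorem \ref{explicit form for p} (both taken from the same source). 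One small gap to tighten in your argument: you justify only that $T(l-1)\otimes T(1)\cong T(l)$ contributes no $T(j)$ with $j\le l-2$, but for the recursion on $\{0,\ldots,l-2\}$ to be self-contained you must also check that $T(j)\otimes T(1)$ for every $j\ge l$ contributes no summand of highest weight $\le l-2$. This is true and follows from an inspection of Proposition \ref{recurrence for tilting modules}, all of whose rules show that tensoring $T(j)$ with $j\ge l-1$ by $T(1)$ yields only summands $T(m)$ with $m\ge l-1$, but it is a case-check that should be stated rather than left implicit.
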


\noindent{}Finally, we have used the above result to investigate the asymptotic behaviour of $p$ as $N \to \infty$. 

The structure of the paper is as follows. In Section \ref{Intro to sl2}, we have recalled the basic definitions and results related to the $\SL-$algebra. In section \ref{Catalan paths}, we have looked at Catalan paths and established a way of finding $t(k,N)$ by certain enumerations of these Catalan paths. In Section \ref{noncrossing matchings}, we have looked at \emph{noncrossing matchings on $2N$ vertices} and how their enumerations result in $t(0,2N)$. Section \ref{Quantum group} provides a brief overview of the background and key concepts related to quantum groups, Weyl filtration and tilting modules that are relevant to our discussion. In Section \ref{Bounded Catalan paths}, we go on to establish the connection between $p(k,N)$ and the enumeration of a specific type of Catalan paths (we call it \emph{Partially $l-$Bounded Catalan Paths}). Finally, Section \ref{Asymptotic behavior} establishes the connection between the tridiagonal Toeplitz matrix and $p(k,N)$ to prove the above result. We then use it to describe the asymptotic behaviour of $p(k,N)$ as $N \to \infty$.

In this report, we have used $\N_0$ as the set of non-negative numbers and $\N$ as the set of positive integers.

\subsection*{Acknowledgements}

I extend my sincere gratitude to Dr Anna Lachowska for her invaluable supervision and insightful discussions throughout the course of this research. This work was carried out during an exchange programme at EPFL. I wish to express my heartfelt thanks to EPFL for their generous hospitality during the development of this paper.

\section{Finite dimensional representations of $\SL$}
\label{Intro to sl2}
Throughout this report, we will fix the ground field as $\C$. The Lie algebra $\SL$ is generated by three basis elements $\{ e, f, h\}$ with given relations. $$[e,f] = h\hspace{3mm} [h,e] = 2e\hspace{3mm} [h,f] = -2f$$

Thus note that $\UU  (\SL) = \C\langle e,f,h \rangle / \langle ef-fe-h, he-eh-2e, hf-fh+2f \rangle$. We want to look at the decomposition of finite dimensional representations of $\SL$ into indecomposable representations. Before doing that, we must classify all the irreducible and indecomposable finite-dimensional representations. 

\begin{thm}\label{thm: simple_unique}\cite[Theorem~V.4.4] {kassel2012quantum} 
For any $n \in \N \cup \{0\}$, there exists a unique (up to isomorphism) simple $n-\mathrm{dimensional}$ $\UU (\SL)-$module.
\end{thm}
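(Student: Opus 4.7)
The plan is to split the statement into existence and uniqueness. For existence (the case $n=0$ being trivial; assume $n\geq 1$), I would write down an explicit candidate module and check the defining relations directly. For uniqueness, I would invoke the standard highest-weight machinery: since $\C$ is algebraically closed and $V$ is finite-dimensional, $h$ has an eigenvector, and climbing by $e$ produces a highest weight vector from which the whole module can be reconstructed.

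For existence I would take a vector space with basis $v_0,\ldots,v_{n-1}$ and prescribe
\[ h\cdot v_i = (n-1-2i)\,v_i, \qquad f \cdot v_i = v_{i+1}, \qquad e \cdot v_i = i(n-i)\, v_{i-1}, \]
with the conventions $v_n := 0$ and $v_{-1} := 0$. Verifying the relations $[e,f]=h$, $[h,e]=2e$, $[h,f]=-2f$ on each basis vector is a routine computation. Simplicity then follows by noting that the $h$-weights $n-1-2i$ are pairwise distinct, so any nonzero submodule contains some $v_i$ (obtained by diagonalising the restriction of $h$), and iterated application of $e$ and $f$ sweeps out the entire basis.

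For uniqueness, given a simple $n$-dimensional module $V$, I would pick an $h$-eigenvector $w$ with eigenvalue $\mu$ (guaranteed by algebraic closedness of $\C$ and finite-dimensionality), use $[h,e]=2e$ to see that $e$ raises $h$-weights by $2$, and invoke finite-dimensionality to produce a highest weight vector $v_0$ satisfying $e\cdot v_0 = 0$ and $h\cdot v_0 = \lambda v_0$. Setting $v_i := f^i\cdot v_0$, an induction on $i$ using $ef = fe + h$ yields the key identity $e\cdot v_i = i(\lambda - i + 1)\, v_{i-1}$. Letting $m$ be the least index with $v_{m+1}=0$, the equation $e\cdot v_{m+1}=0$ forces $\lambda = m$, a non-negative integer. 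Since the span of $v_0,\ldots,v_m$ is a nonzero submodule, simplicity gives $n = m+1$ and thus $\lambda = n-1$, and the explicit action above identifies $V$ with the module constructed in the existence step.

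The only genuinely delicate step is the inductive formula $e\cdot v_i = i(\lambda - i + 1)\, v_{i-1}$: it requires carefully tracking how $ef = fe + h$ interacts with the known $h$-weight of $v_{i-1}$. Once that identity is in place, both halves of the theorem fall out cleanly, and the existence construction is recognised as the unique simple module produced by the uniqueness argument.
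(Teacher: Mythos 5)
Your proposal is correct, and it is precisely the standard highest-weight argument for $\SL$; the paper gives no proof of its own but simply cites \cite[Theorem~V.4.4]{kassel2012quantum}, where this same route (existence via the explicit module with $e\cdot v_i = i(n-i)v_{i-1}$, uniqueness via an $h$-eigenvector, climbing by $e$, the identity $e\cdot f^i v_0 = i(\lambda-i+1)f^{i-1}v_0$, and the forced value $\lambda=n-1$) is the one taken. The computations check out, including the inductive step using $ef=fe+h$ together with $h\cdot v_i=(\lambda-2i)v_i$, so there is nothing to flag.
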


Thus for $n \in \N \cup \{0\}$ we  define $V(n)$ as the simple $n-$dimensional $\UU(\SL)-$module. We will now look at a result that classifies all the finite-dimensional $\UU (\SL)-$modules.

\begin{thm}\label{thm: semisim}\cite[Theorem V.4.6]{kassel2012quantum}
Any finite-dimensional $\UU (\SL)-$module is semisimple.
\end{thm}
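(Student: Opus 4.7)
The plan is to prove complete reducibility via the classical Casimir-element argument. First I would introduce the quadratic Casimir
$$C = ef + fe + \tfrac{1}{2}h^2 \in \UU(\SL),$$
and verify by a direct bracket computation that $[C,e]=[C,f]=[C,h]=0$, so $C$ is central. Consequently $C$ acts on any finite-dimensional $\UU(\SL)$-module as a module endomorphism. Evaluating $C$ on a highest weight vector of $V(n)$ (using $ef = fe + h$) shows that $C$ acts as the scalar $\tfrac{n(n+2)}{2}$ on $V(n)$; in particular these scalars are pairwise distinct and are nonzero for $n \geq 1$, which is the crucial separation property that will drive the splitting argument.

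Next I would carry out the standard reduction: to prove semisimplicity, it suffices to show that every short exact sequence $0 \to W \to V \to \C \to 0$ with trivial one-dimensional quotient splits as $\UU(\SL)$-modules. Indeed, given an arbitrary submodule $W \subseteq V$, equip $\mathrm{Hom}_\C(V/W, V)$ with the adjoint $\SL$-action and let $E$ be the subspace of maps $\varphi$ for which $\pi \circ \varphi$ is a scalar multiple of the identity, where $\pi: V \to V/W$ is the quotient. Then $E$ sits in a short exact sequence
$$0 \to \mathrm{Hom}_\C(V/W, W) \to E \to \C \to 0,$$
and a $\UU(\SL)$-equivariant section of this trivial quotient provides a section $V/W \to V$ of $\pi$, exhibiting $V \cong W \oplus V/W$.

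To establish the codimension-one case, I would induct on $\dim W$. If $W$ admits a proper nonzero submodule $W'$, applying the hypothesis first to $0 \to W/W' \to V/W' \to \C \to 0$ and then to the preimage of the resulting section reduces to the case where $W$ is simple. If $W = V(n)$ with $n \geq 1$, then $C$ acts as the nonzero scalar $\tfrac{n(n+2)}{2}$ on $W$ and as $0$ on the trivial quotient, so the generalised $0$-eigenspace of $C$ on $V$ is a one-dimensional $\SL$-stable complement to $W$. The remaining case is $W = V(0) = \C$, where the Casimir gives no information; but then $V$ is two-dimensional and the filtration forces $e, f, h$ to act as strictly upper-triangular $2 \times 2$ matrices, so $h = [e,f] = ef - fe = 0$, and then $[h,e] = 2e$ forces $e = 0$ and similarly $f = 0$, giving $V \cong \C \oplus \C$ as $\SL$-modules. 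The main obstacle is packaging the reduction cleanly so that the awkward self-extension case of the trivial module is isolated to this two-dimensional instance, where a direct commutator computation using $\SL = [\SL,\SL]$ suffices.
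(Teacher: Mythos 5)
The paper does not prove this theorem; it simply cites \cite[Theorem V.4.6]{kassel2012quantum} and moves on. Your argument is a correct and complete reconstruction of the standard Casimir-element proof of complete reducibility for $\SL$ (centrality of $C=ef+fe+\tfrac12 h^2$, the scalar $\tfrac{n(n+2)}{2}$ on $V(n)$, the reduction to codimension-one trivial quotients via $\mathrm{Hom}_\C(V/W,V)$, the generalized-eigenspace splitting when $W$ is a nontrivial simple, and the direct $2\times 2$ commutator computation ruling out a nonsplit self-extension of the trivial module), which is in substance the argument given in the cited reference.
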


Thus Theorem \ref{thm: simple_unique} and \ref{thm: semisim} implies that if $V$ is a simple finite-dimensional module of $\UU(\SL)$ then there exists a finite (possibly repeating) sequence $(n_i : i \in \N)$ such that
$$V \cong \bigoplus\limits_{i\in \N} V(i)^{\oplus n_i}.$$

We will focus on the direct sum decomposition of the tensor product of two finite-dimensional modules. By Theorem \ref{thm: semisim}, we only need to look at the tensor product of two simple finite-dimensional $\UU (\SL)-$modules. The direct sum decomposition of the tensor product of two simple finite-dimensional is given by \emph{Clebsch-Gordan formulae}. 

\begin{prop}[Clebsch-Gordan formulae]\label{clebsch-gordan} For any $n,m \in \N$, if $n\geq m$ then

$$ V(n) \otimes V(m) = \bigoplus\limits_{i=0}^m V(n-m+2i). $$

\end{prop}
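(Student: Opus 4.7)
The plan is to combine semisimplicity (Theorem~\ref{thm: semisim}) with a weight-space dimension count. Since $V(n) \otimes V(m)$ is finite dimensional, Theorem~\ref{thm: semisim} guarantees a decomposition $V(n) \otimes V(m) \cong \bigoplus_{k \geq 0} V(k)^{\oplus c_k}$ for unique multiplicities $c_k \in \N_0$, so it suffices to show $c_k = 1$ for $k \in \{n-m,\, n-m+2,\, \ldots,\, n+m\}$ and $c_k = 0$ otherwise.

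First I would recall the standard weight basis $\{v_0, \ldots, v_n\}$ of $V(n)$ with $h \cdot v_j = (n - 2j) v_j$, and similarly $\{w_0, \ldots, w_m\}$ for $V(m)$. Since $h$ acts on $V(n) \otimes V(m)$ as a derivation, $\{v_j \otimes w_i\}$ is a basis of $h$-eigenvectors with eigenvalues $n + m - 2(i+j)$. Consequently the weight space of weight $\lambda = n + m - 2s$ is spanned by those $v_j \otimes w_{s-j}$ with $0 \le j \le n$ and $0 \le s - j \le m$. Using $n \geq m$, a short case analysis yields
\[
\dim (V(n) \otimes V(m))_{n+m-2s} \;=\;
\begin{cases}
s + 1, & 0 \le s \le m, \\
m + 1, & m \le s \le n, \\
n + m - s + 1, & n \le s \le n + m.
\end{cases}
\]

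Next I would use the fact that every simple summand $V(k)$ contributes exactly one dimension to the weight space of weight $\ell$ whenever $|\ell| \le k$ and $\ell \equiv k \pmod 2$, and nothing otherwise. Telescoping this over the decomposition gives
\[
c_k \;=\; \dim (V(n) \otimes V(m))_k \;-\; \dim (V(n) \otimes V(m))_{k+2}
\]
for every $k \ge 0$ (the parity constraint is automatic, since all weights have the parity of $n+m$). Substituting the piecewise formula above, the difference vanishes for $0 \le k < n-m$ (both dimensions equal $m+1$) and equals $1$ for each $k \in \{n-m,\, n-m+2,\, \ldots,\, n+m\}$, which is exactly the claim. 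A dimension check, $\sum_{i=0}^m (n-m+2i+1) = (m+1)(n+1) = \dim V(n) \cdot \dim V(m)$, confirms nothing is missed.

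The main obstacle is really just the piecewise weight-space count; everything else is bookkeeping made available by semisimplicity. A more constructive alternative would be to exhibit a highest weight vector $u_i \in V(n) \otimes V(m)$ of weight $n+m-2i$ for each $i = 0, \ldots, m$ via an explicit formula $u_i = \sum_{j=0}^i (-1)^j \alpha_{i,j}\, v_j \otimes w_{i-j}$, with the $\alpha_{i,j}$ tuned so that $e \cdot u_i = 0$; the submodules $\UU(\SL) \cdot u_i$ would then be simple of the required dimensions and span by a dimension count. I would prefer the weight-space route, since it bypasses solving for the coefficients $\alpha_{i,j}$.
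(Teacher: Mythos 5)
The paper states this proposition without proof, presenting the Clebsch--Gordan formula as a classical fact, so there is no argument in the paper to compare against. Your weight-space argument is correct and is one of the two standard proofs (the other being the explicit construction of highest weight vectors, which you mention and reasonably set aside). The decomposition from Theorem~\ref{thm: semisim}, the piecewise dimension count of $(V(n)\otimes V(m))_{n+m-2s}$, and the telescoping identity $c_k = \dim(V(n)\otimes V(m))_k - \dim(V(n)\otimes V(m))_{k+2}$ are all applied correctly, and the final dimension check is a sensible sanity verification. One remark worth making explicit: your proof uses the convention that $V(n)$ has highest weight $n$ and dimension $n+1$. The paper's earlier sentence ``we define $V(n)$ as the simple $n$-dimensional $\UU(\SL)$-module'' literally says dimension $n$, under which the proposition would fail a dimension count ($nm \ne (m+1)n$ in general); the rest of the paper (Proposition~\ref{prop: recursion of t} and its consequences) is only consistent with the highest-weight-$n$ convention, so you have silently adopted the convention the paper actually needs, which is the right call but would be worth flagging.
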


We are interested in on the direct sum decomposition of $V(1)^{\otimes n}$ as a direct sum decomposition of simple modules. Define $V(1)^{\otimes 0 } := V(0)$. Since $V(1)^{\otimes n} = \bigoplus\limits_{i\in \N_0} V(i)^{\oplus n_i}$ for some sequence $(n_i : i \in \N_0)$, define a sequence $(t(k,N): N \in \N, k \in \N_0)$ such that for every $N \in \N$

$$V(1)^{\otimes N} = \bigoplus_{k\in \N} V(k)^{\oplus t(k,N)} .$$

We fix $t(-1,N) = 0$ for every $N \in \N_0$.

\begin{prop}\label{prop: recursion of t}
    For every $N \in \N$ and $k \in \N_0$, $$t(k,N) = t(k-1,N-1)+ t(k+1,N-1)$$
    $$t(k,0) = \begin{dcases*}
1
   & if  $k = 1$\,, \\[1ex]
0
   & otherwise\,.
\end{dcases*}$$
\end{prop}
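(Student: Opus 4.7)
The plan is to induct on $N$, using the Clebsch--Gordan formula (Proposition~\ref{clebsch-gordan}) at each inductive step to pass from the decomposition of $V(1)^{\otimes(N-1)}$ to that of $V(1)^{\otimes N}$.

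For the base case I would read off the initial condition directly from the definition $V(1)^{\otimes 0} := V(0)$, which forces $t(0,0)=1$ and $t(k,0)=0$ otherwise; the statement's appearance of ``$k=1$'' I read as a typographical slip that does not affect the recursion. For the inductive step, assume $V(1)^{\otimes(N-1)} = \bigoplus_{k \in \N_0} V(k)^{\oplus t(k,N-1)}$. Tensoring on the right with $V(1)$ and distributing gives
\[
V(1)^{\otimes N} \;=\; \bigoplus_{k \in \N_0} \bigl(V(k)\otimes V(1)\bigr)^{\oplus t(k,N-1)}.
\]
Applying Proposition~\ref{clebsch-gordan} with $m=1$ yields $V(k)\otimes V(1) = V(k-1)\oplus V(k+1)$ for $k\geq 1$, while $V(0)\otimes V(1) = V(1)$ follows from the fact that $V(0)$ is the trivial module (which is consistent with the same formula once one adopts the convention $t(-1,N)=0$).

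The last step is to collect the multiplicity of a fixed $V(k)$ in the right-hand side: a copy of $V(k)$ appears either as the ``$V(j-1)$'' summand from the term indexed by $j=k+1$, contributing $t(k+1,N-1)$ copies, or as the ``$V(j+1)$'' summand from the term indexed by $j=k-1$, contributing $t(k-1,N-1)$ copies (with the boundary value $t(-1,N-1)=0$ absorbing the $k=0$ case). Summing the two contributions recovers the claimed identity. I do not expect any substantive obstacle here; the only issue requiring a moment's care is the bookkeeping at $k=0$, handled cleanly by the boundary convention $t(-1,N)=0$, and no technical input is needed beyond Clebsch--Gordan and the evident distributivity of tensor product over direct sum.
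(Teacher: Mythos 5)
Your proof is correct and takes the same route as the paper: tensor one more copy of $V(1)$ onto the decomposition of $V(1)^{\otimes (N-1)}$, apply Clebsch--Gordan, and collect the multiplicity of each $V(k)$, with the convention $t(-1,\cdot)=0$ absorbing the $k=0$ boundary. You are also right that the stated initial condition is a typographical slip: since $V(1)^{\otimes 0}:=V(0)$ and $V(0)$ is the trivial module, the forced base value is $t(0,0)=1$ and $t(k,0)=0$ for $k\geq 1$, and indeed this is the value the recursion needs to reproduce $t(k,1)=\delta_{1k}$; the paper's own proof carries the same misprint in its base case, and also silently writes a single $V(1)$ where $V(1)^{\oplus t(0,N-1)}$ is meant, whereas your reindexing handles that term correctly.
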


\begin{proof}
    Since $V(1)^{\otimes 0} = V(0)$, we get 

    $$t(k,0) = \begin{dcases*}
1
   & if  $k = 1$\,, \\[1ex]
0
   & otherwise\,.
\end{dcases*}$$
    
    Moreover, Note that,
    \begin{align*}
        V(1)^{\otimes N} &= V(1)^{\otimes (N-1)} \otimes V(1) = \bigoplus\limits_{k \in \N_0} V(k)^{\oplus t(k,N-1)} \otimes V(1) \\&= V(1) \oplus \bigoplus\limits_{k \in \N} (V(k-1) \oplus V(k+1))^{\oplus t(k,N-1)} \\&= \bigoplus\limits_{k \in \N} V(k-1)^{\oplus t(k,N-1)} \oplus \left( V(1)\oplus\bigoplus\limits_{k \in \N} V(k+1)^{\oplus t(k,N-1)}\right) \\&= \bigoplus\limits_{k \in \N_0} V(k)^{\oplus t(k+1,N-1)} \oplus \bigoplus\limits_{k \in \N_0} V(k)^{\oplus t(k-1,N-1)} \\&= \bigoplus\limits_{k \in \N_0} V(k)^{\oplus t(k+1,N-1)} \oplus \bigoplus\limits_{k \in \N_0} V(k)^{\oplus t(k-1,N-1)} \\ \Longrightarrow \bigoplus\limits_{k\in \N_0} V(k)^{\oplus t(k,N)} &= \bigoplus\limits_{k \in \N_0} V(k)^{\oplus (t(k+1,N-1) + t(k-1,N-1))}
    \end{align*}
    Therefore, $t(k,N) = t(k-1,N-1) + t(k+1,N-1)$.
\end{proof}
\begin{cor}\label{cor: finite sum for t terms}
    If $k \geq 1, N \in \N$ and $k' \in \{ 1,2,\cdots, k \}$, then $t(k,N) = \sum\limits_{i=0}^{N-1} t(k'-1, i)t(k-k', N-i-1)$
\end{cor}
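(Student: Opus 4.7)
The plan is to prove the identity by induction on $N$, using the recurrence $t(k,N) = t(k-1,N-1) + t(k+1,N-1)$ from Proposition \ref{prop: recursion of t} together with the convention $t(-1,m) = 0$.

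For the base case $N=1$, I would observe that the recurrence and initial conditions force $t(k,1) = \delta_{k,1}$, while the right-hand side collapses to its single summand at $i=0$, namely $t(k'-1,0)\,t(k-k',0) = \delta_{k',1}\,\delta_{k,k'}$; under the hypothesis $1 \leq k' \leq k$ these two quantities agree.

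For the inductive step, assume the identity for $N-1$ and all admissible parameters, and expand $t(k,N) = t(k-1,N-1) + t(k+1,N-1)$. Split into two cases depending on whether $k > k'$ or $k = k'$. When $k > k'$, both $k-1$ and $k+1$ are still $\geq k'$, so the inductive hypothesis applies to each summand with the same parameter $k'$. Adding the two convolutions and invoking the recurrence inside the sum via $t(k-k',N-1-i) = t(k-k'-1,N-2-i) + t(k-k'+1,N-2-i)$ collapses the result to $\sum_{i=0}^{N-2} t(k'-1,i)\,t(k-k',N-1-i)$. The only missing term, at $i=N-1$, equals $t(k'-1,N-1)\,t(k-k',0)$, which vanishes because $k > k'$ makes $\delta_{k-k',0} = 0$. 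This finishes the generic case.

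The main obstacle is the boundary case $k = k'$, in which $k-1 < k'$ prevents a direct application of the inductive hypothesis to $t(k-1,N-1)$. Here I would leave $t(k-1,N-1) = t(k'-1,N-1)$ untouched and apply the inductive hypothesis only to $t(k+1,N-1)$ (valid since $k+1 > k'$), obtaining
$$t(k,N) = t(k'-1,N-1) + \sum_{i=0}^{N-2} t(k'-1,i)\,t(1,N-2-i).$$
To reconcile with the target sum $\sum_{i=0}^{N-1} t(k'-1,i)\,t(0,N-1-i)$, I would exploit the identity $t(0,m) = t(1,m-1)$ for $m \geq 1$, which is immediate from the recurrence and the convention $t(-1,m-1) = 0$. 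Substituting $t(0,N-1-i) = t(1,N-2-i)$ in the summands with $i \leq N-2$ and isolating the tail term $i = N-1$ contributing $t(k'-1,N-1)\,t(0,0) = t(k'-1,N-1)$ produces exactly the expression computed above, closing the induction.
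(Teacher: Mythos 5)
Your proof is correct and takes essentially the same approach as the paper's: induction on $N$ with base case $N=1$, splitting the inductive step into the generic case $k>k'$ (inductive hypothesis applied to both $t(k\pm1,N-1)$, followed by the recurrence and observing that the $i=N-1$ term vanishes) and the boundary case $k=k'$ (inductive hypothesis applied only to $t(k+1,N-1)$, then the identity $t(0,m)=t(1,m-1)$). The only cosmetic difference is that in the case $k=k'$ you apply the inductive hypothesis to $t(k+1,N-1)$ with inner parameter $k'$ while the paper uses inner parameter $2$ and then reindexes; both lead to the same computation.
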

\begin{proof}
    We will prove the Corollary by induction on $N \in \N$.

    \textbf{Base Case.} If $N = 1,$ then $\sum\limits_{i=0}^{N-1} t(k'-1, i)t(k-k', N-i-1) = t(k'-1,0)t(k-k',0)$ we will take two cases.
    \begin{itemize}
        \item If $k =1,$ then $k' = 1.$ Therefore, $$\sum\limits_{i=0}^{N-1}t(k'-1,i)t(k-k', N-i-1) = t(0,0)^2 = 1 = t(k,N).$$
        \item If $k >1,$ then since $(k-k') + (k'-1) = k-1 > 0$ we get that either $(k-k')>0$ or $(k'-1)>0$. Therefore $$\sum\limits_{i=0}^{N-1}t(k'-1,i)t(k-k', N-i-1) = t(k'-1,0)t(k-k',0) = 0 = t(k,N).$$
    \end{itemize}

    \textbf{Induction Step.} If $N>1$, then Proposition  \ref{prop: recursion of t} yields $t(k,N) = t(k-1,N-1) + t(k+1, N-1)$. We have two cases.
    \begin{itemize}
\item If $k = k'$, then induction hypothesis yields $t(k+1,N-1) = \sum\limits_{i=0}^{N-2} t(1,i)t(k-1, N-i-2)$. Therefore,
\begin{align*}
     t(k,N) &= t(k-1,N-1) +  \sum\limits_{i=0}^{N-2} t(1,i)t(k-1, N-i-2)\\
     &= t(0,0)t(k-1,N-1) +  \sum\limits_{i=1}^{N-1} t(1,i-1)t(k-1, N-i-1)\\
     &= t(0,0)t(k-1,N-1) + \sum\limits_{i=1}^{N-1} t(0,i)t(k-1, N-i-1)\tag{by Proposition \ref{prop: recursion of t}}\\&= \sum_{i=0}^{N-1} t(k-k',i)t(k'-1, N-i-1)
\end{align*}

\item If $k>k',$  then by induction hypothesis $t(k-1,N-1) = \sum\limits_{i=0}^{N-2} t(k'-1,i)t(k-1-k', N-i-2) $ and $t(k+1, N-1) = \sum\limits_{i=0}^{N-2} t(k'-1,i)t(k+1-k', N-i-2)$. Therefore,
\begin{align*}
    t(k,N) &= \sum\limits_{i=0}^{N-2} t(k'-1,i) \left( t(k+1-k', N-i-2) + t(k-1-k', N-i-2) \right)\\
    &= \sum\limits_{i=0}^{N-2} t(k'-1,i)t(k-k', N-i-1) \tag{by Proposition \ref{prop: recursion of t}}.
\end{align*}
\end{itemize}
\end{proof}

In the following sections, we will look at how $t(k,N)$ can be obtained by certain combinatorial objects like Catalan paths and non-crossing matching on $2N$ points. 

\section{Catalan paths and its correspondence to $t(k,N)$}\label{Section: catalan paths and t(k,N)}
\label{Catalan paths}
We will now focus on Catalan paths in $\mathbb \Z^2$. This section will show how counting these Catalan paths gives us the coefficients $t(k, N)$. Catalan paths \cite[\S~10.8]{bona2015handbook} are finite paths in $\Z^2$ starting from $(0,0)$ with steps in $(1,1)$ and $(1,-1)$ above the X-axis.

\begin{figure}[H]
\centering
\begin{subfloat}[Example 1] {
\label{ex:1}
\centering
    \begin{tikzpicture}
   
            \draw[very thin,color=gray!15,step=1] (0,0) grid (5,5);
        \draw[-latex] (0,0) -- (5.2,0) node[right] { };
        \draw[-latex] (0,0) -- (0,5.2) node[above] {};
        \coordinate [label=below left:O] (a) at (0,0);
        \foreach \i in {1,2,...,5}
        \draw[gray!65] (\i,.1)--(\i,-.1) node[below] {$\i$};
        \foreach \i in {1,2,...,5}
        \draw[gray!65] (.1,\i)--(-.1,\i) node[left] {$\i$};
        \draw[thick] (0,0)--(2,2){};
        \draw[thick] (2,2)--(4,0){};
        \draw[thick] (4,0)--(5,1){};
    \end{tikzpicture}
}
\end{subfloat}
\hspace{10pt}       
\begin{subfloat}[Example 2] {
\centering
    \begin{tikzpicture}
    
        \draw[very thin,color=gray!15,step=1] (0,0) grid (5,5);
        \draw[-latex] (0,0) -- (5.2,0) node[right] { };
        \draw[-latex] (0,0) -- (0,5.2) node[above] {};
        \coordinate [label=below left:O] (a) at (0,0);
        \foreach \i in {1,2,...,5}
        \draw[gray!65] (\i,.1)--(\i,-.1) node[below] {$\i$};
        \foreach \i in {1,2,...,5}
        \draw[gray!65] (.1,\i)--(-.1,\i) node[left] {$\i$};
        \draw[thick] (0,0)--(1,1){};
        \draw[thick] (1,1)--(2,0){};
        \draw[thick] (2,0)--(4,2){};
        \draw[thick] (4,2)--(5,1){};
    \end{tikzpicture}
}     
\end{subfloat}
\caption*{Examples of Catalan paths}
\label{Slika:fig1}

\end{figure}

We will now give a formal definition of Catalan paths, represented through a function on $\Z$, such that $f(i)$ is the point of intersection of the path with $x=i$. For example, the Catalan path in Example 1 will be represented by a function $f: \{0,1,2,3,4,5\} \to \Z$ such that 

$$ f(0) = 0,\, f(1) = 1,\, f(2) = 2, \, f(3) = 1 ,\, f(4) = 0, \, f(5) = 1.$$

\begin{defn}[Catalan paths]\label{defn: Catalan paths}\cite[\S~10.8]{bona2015handbook}
For $N \in \N_0$, a function $f: \{0,1,\cdots ,N\} \to \Z$ is called a Catalan path (of length $N$) if $f(0) = 0$, $\mathrm{Img(f) \subset \N_0}$ and $\lvert f(i)-f(i-1)\rvert = 1$ for every $i \in \{1,2,\cdots, N\}$
\end{defn}

For simplicity, a Catalan path $f$ is said to end at $(N,k)$ if $\mathrm{dom}(f) = \{0,1,\cdots, N\}$ and $f(N) = k$. Since $f(i+1)-f(i) \in \{1,-1\}$ for any Catalan path $f:\{0,1,\cdots, N\}\to \Z$ and $i \in \{0,1,\cdots, N-1\}$, we can state the following Remark. 

\begin{rem}\label{rem: Parity of difference}
    For any Catalan path $f: \{0,1, \cdots, N\} \to \Z$ and $i,j \in \{0,1, \cdots, N\}$, $$j-i \equiv f(j)-f(i) \pmod{2}$$.
\end{rem}

The Catalan paths which ends at $(N, 0)$ for some $N \in \N$ are called \emph{Dyck paths} \cite[\S~10.8]{bona2015handbook}. For $m,n \in \Z$, define $\Bar{t} (m,n)$ as the number of Catalan paths of length $m$ from $(0,0)$ to $(m,n)$. Note that $\Bar{t} (m,n) = 0$ if $m<0$ or $n<0$.

\begin{prop} \label{prop: Recurrence of catalan paths}
    For any $k \in \N$ and $N\in \N_0$, $\Bar{t}(N,k) = \Bar{t}(N-1,k-1) + \Bar{t}(N-1, k+1)$.
\end{prop}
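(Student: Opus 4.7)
The plan is to prove this by a direct combinatorial bijection, partitioning the Catalan paths counted by $\bar{t}(N,k)$ according to the value of the penultimate coordinate $f(N-1)$. Since Definition \ref{defn: Catalan paths} forces $|f(N) - f(N-1)| = 1$, the value $f(N-1)$ must be either $k-1$ or $k+1$, so every such path ends with exactly one of two possible last steps: an up-step from $(N-1, k-1)$ or a down-step from $(N-1, k+1)$.

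First I would dispose of the boundary case $N = 0$ quickly. With $k \geq 1$, no Catalan path of length $0$ can end at $(0,k)$, since the only such path is the constant path at $0$; hence $\bar{t}(0,k) = 0$. The right-hand side is $\bar{t}(-1, k-1) + \bar{t}(-1, k+1) = 0 + 0$ by the convention stated just before the proposition, so the identity holds trivially.

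For $N \geq 1$ I would set up two truncation/extension maps. Given a Catalan path $g$ of length $N-1$ ending at $(N-1, k-1)$, define $f$ on $\{0, \ldots, N\}$ by $f|_{\{0,\ldots,N-1\}} = g$ and $f(N) = k$; this appends an up-step and produces a Catalan path ending at $(N,k)$, valid because $k \geq 1 \geq 0$ keeps the image in $\N_0$. Analogously, appending a down-step to a path ending at $(N-1, k+1)$ yields a path ending at $(N, k)$, again with $f(N) = k \geq 0$. Conversely, restricting any Catalan path ending at $(N, k)$ to $\{0, \ldots, N-1\}$ produces a Catalan path ending at either $(N-1, k-1)$ or $(N-1, k+1)$, and these two subsets are disjoint and cover everything. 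Hence the cardinalities add, giving $\bar{t}(N,k) = \bar{t}(N-1, k-1) + \bar{t}(N-1, k+1)$.

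There is no real obstacle; the only point worth checking carefully is that the hypothesis $k \geq 1$ is used precisely to ensure that $(N-1, k-1)$ is a legitimate endpoint (nonnegative second coordinate) and that the appended path remains nonnegative at its new final vertex. This is the standard Pascal-type recurrence for ballot/Catalan numbers, and I expect the argument to be purely bookkeeping once the bijection above is spelled out.
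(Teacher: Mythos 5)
Your proof is correct and takes essentially the same approach as the paper: both partition paths ending at $(N,k)$ according to whether $f(N-1) = k-1$ or $f(N-1) = k+1$ and use the restriction map to $\{0,\ldots,N-1\}$ as the bijection. You are somewhat more explicit in spelling out the extension maps and in disposing of the $N=0$ boundary case, which the paper leaves to the convention $\bar{t}(m,n)=0$ for $m<0$, but the underlying idea is identical.
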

\begin{proof}
    Let $A(i,j)$ be the set of all Catalan path of length $i$ where $f(i) = j$. Since $\lvert f(N)-f(N-1) \rvert = 1$ for any $f \in A(N,k)$, we get that $f(N-1) \in \{ k-1, k+1\}$ for any $f \in A(N,k)$. Therefore $f \in A(N,k)$ iff $f \vert_{\{0,\cdots , N-1\}} \in A(N-1,k-1) \cup A(N-1,k+1)$. Hence $\Bar{t} (N,k) = \lvert A(N,k) \rvert = \lvert A(N-1 , k-1) \rvert + \lvert A(N-1, k+1) \rvert = \Bar{t}(N-1, k-1) + \Bar{t}(N-1, k+1)$.
\end{proof}

We will now look at our main results from this section.

\begin{prop}\label{prop: t(k,N) and Catalan paths}
    For every $k \in \N_0$ and $N \in \N$, $t(k,N) = \Bar{t} (N,k)$.
\end{prop}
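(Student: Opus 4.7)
The plan is to prove the identity by induction on $N$, using the observation that both sequences $(t(k, N))$ and $(\Bar{t}(N, k))$ satisfy the same two-term recurrence $x(k, N) = x(k-1, N-1) + x(k+1, N-1)$ together with matching boundary conventions at $k = 0$. The two recurrences are already available: the algebraic one is Proposition \ref{prop: recursion of t}, and the combinatorial one is Proposition \ref{prop: Recurrence of catalan paths}.

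For the base case I would take $N = 1$. Since $V(1)^{\otimes 1} = V(1)$, one has $t(1, 1) = 1$ and $t(k, 1) = 0$ for $k \neq 1$. On the path side, any Catalan path of length $1$ must satisfy $f(0) = 0$ and $f(1) \in \{-1, 1\}$; the constraint $\mathrm{Img}(f) \subset \N_0$ from Definition \ref{defn: Catalan paths} forces $f(1) = 1$, so $\Bar{t}(1, 1) = 1$ and $\Bar{t}(1, k) = 0$ for $k \neq 1$. Both sides agree.

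For the inductive step, assume $t(k, N-1) = \Bar{t}(N-1, k)$ for all $k \in \N_0$, and fix $k \in \N_0$. If $k \geq 1$, both recurrences apply, so
\begin{equation*}
t(k, N) = t(k-1, N-1) + t(k+1, N-1) = \Bar{t}(N-1, k-1) + \Bar{t}(N-1, k+1) = \Bar{t}(N, k),
\end{equation*}
where the middle equality is the induction hypothesis. For $k = 0$, the algebraic recursion reads $t(0, N) = t(-1, N-1) + t(1, N-1) = t(1, N-1)$ by the convention $t(-1, \cdot) = 0$. On the combinatorial side, a Catalan path ending at $(N, 0)$ has $f(N-1) \in \{-1, 1\}$, but $\mathrm{Img}(f) \subset \N_0$ leaves only $f(N-1) = 1$, hence $\Bar{t}(N, 0) = \Bar{t}(N-1, 1)$. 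Applying the induction hypothesis closes this case.

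There is no real obstacle here; the argument is essentially a matching of recurrences and initial data. The only subtlety is the boundary $k = 0$, where the positivity constraint on Catalan paths eliminates one of the two possible predecessors, exactly mirroring the convention $t(-1, N) = 0$ on the representation-theoretic side. Once this is observed, both sequences are uniquely determined by the common recurrence together with their common value at $N = 1$, so they coincide for every $N \in \N$ and $k \in \N_0$.
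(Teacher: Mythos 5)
Your proof is correct and follows essentially the same path as the paper's: induction on $N$, the base case $N=1$ checked directly, and the inductive step matching the two recurrences (Proposition \ref{prop: recursion of t} on the module side, Proposition \ref{prop: Recurrence of catalan paths} on the path side), with the boundary case $k=0$ handled by noting that the nonnegativity constraint kills the $(N-1,-1)$ term. The only cosmetic difference is that you re-derive $\Bar{t}(N,0)=\Bar{t}(N-1,1)$ directly from the path constraint rather than invoking the convention $\Bar{t}(m,n)=0$ for $n<0$, which is if anything slightly cleaner since Proposition \ref{prop: Recurrence of catalan paths} is stated only for $k \in \N$.
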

\begin{proof}
We will prove this by induction on $N$.

\vspace{3mm}

\textbf{Base Case.} Note that there is only one Catalan path of length $1$ which is given below,

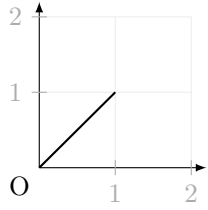
\begin{figure}[H]
    \centering
    \begin{tikzpicture}
    
        \draw[very thin,color=gray!15,step=1] (0,0) grid (2,2);
        \draw[-latex] (0,0) -- (2.2,0) node[right] { };
        \draw[-latex] (0,0) -- (0,2.2) node[above] {};
        \coordinate [label=below left:O] (a) at (0,0);
        \foreach \i in {1,2}
        \draw[gray!65] (\i,.1)--(\i,-.1) node[below] {$\i$};
        \foreach \i in {1,2}
        \draw[gray!65] (.1,\i)--(-.1,\i) node[left] {$\i$};
        \draw[thick] (0,0)--(1,1){};
    \end{tikzpicture}
    \label{fig:my_label}
    \caption*{Catalan path of length $1$}
\end{figure}
Therefore for every $k \in \N_0$, $\Bar t(1,k) = \delta_{1k}$ where $\delta_{ij}$ is the kronecker delta function. Since $V(1)^{\otimes 1} = V(1)$, we get that $t(k,1) = \delta_{1k}$ for every $k \in \N_0$. Therefore, $t(k,1) = \Bar{t} (1,k)$ for every $k \in \N_0$.
  
\vspace{3mm}

\textbf{Induction step.} Proposition \ref{prop: recursion of t} yields that $t(k,N) = t(k-1,N-1) + t(k+1, N-1)$. We will take two cases,
\begin{itemize}
    \item If $k =0$, then $t(0,N) = t(1,N-1)$. By induction hypothesis $t(0,N) = \Bar{t}(N-1,1)$. Proposition \ref{prop: Recurrence of catalan paths} yields $\Bar{t}(N,0) = \Bar{t}(N-1,1)+ \Bar{t}(N-1,-1) = \Bar{t}(N-1,1) = t(0,N)$.

    \item If $k>0$, then induction hypothesis yields $t(k,N) = \Bar{t}(N-1,k-1) + \Bar{t}(N-1,k+1)$. Thus Proposition \ref{prop: Recurrence of catalan paths} yields $t(k,N) = \Bar{t}(N,k)$.
\end{itemize}
\end{proof} 

\begin{cor}
For any $k,N \in \N$, if $t(k,N) > 0$ then $N \equiv k \pmod{2}$.
\end{cor}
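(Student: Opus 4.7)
The plan is to reduce the statement to the combinatorial model established in Proposition~\ref{prop: t(k,N) and Catalan paths}, so that a parity obstruction on lattice paths does the work. Concretely, if $t(k,N) > 0$, then by Proposition~\ref{prop: t(k,N) and Catalan paths} we have $\bar{t}(N,k) > 0$, which means the set $A(N,k)$ of Catalan paths of length $N$ ending at height $k$ is nonempty. Pick any $f \in A(N,k)$; by Definition~\ref{defn: Catalan paths} this is a function with $f(0) = 0$ and $f(N) = k$.

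Next, I would invoke Remark~\ref{rem: Parity of difference}, which says that for any Catalan path $f$ and any two indices $i, j$ in its domain, $j - i \equiv f(j) - f(i) \pmod{2}$. Applying this to $i = 0$ and $j = N$ gives $N \equiv f(N) - f(0) = k \pmod{2}$, which is exactly the desired conclusion.

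There is essentially no obstacle: the content is entirely captured by the two facts quoted above, and the proof is a one-line parity argument once the correspondence between $t(k,N)$ and Catalan paths is in hand. The only small point worth being careful about is that Remark~\ref{rem: Parity of difference} is stated as a consequence of each single step $f(i+1) - f(i) \in \{1, -1\}$, so it genuinely applies at the endpoints of the whole path, not only at adjacent indices. Thus the corollary follows immediately by combining Proposition~\ref{prop: t(k,N) and Catalan paths} with Remark~\ref{rem: Parity of difference}.
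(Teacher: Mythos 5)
Your proof is correct and follows exactly the same route as the paper's: pass from $t(k,N)>0$ to $\bar{t}(N,k)>0$ via Proposition~\ref{prop: t(k,N) and Catalan paths}, pick a Catalan path ending at $(N,k)$, and apply Remark~\ref{rem: Parity of difference} at the endpoints $i=0$, $j=N$. There is nothing to add.
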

\begin{proof}
Proposition \ref{prop: t(k,N) and Catalan paths} yields $\Bar{t}(N,k) > 0$. Thus there exists a Catalan path $f: \{0,1 \cdots, N\} \to \Z$ such that $f(N) = k$. Remark \ref{rem: Parity of difference} yields $(N - 0) \equiv (f(N)-f(0)) \pmod 2$. Therefore $N \equiv k \pmod 2$.
\end{proof}

\section{Noncrossing matching and its relation to $t(k,N)$}
\label{noncrossing matchings}
We will now look at ways of connecting $2N$ points in the plane lying on a horizontal line by $N$  nonintersecting arcs, each connecting two points and lying above the line.

\begin{figure}[H]
    \centering
\begin{subfloat} {
\label{ex:1}
\centering
    \begin{tikzpicture}
            \foreach \i in {1,2,...,6}
        \draw[black!65] (\i-1,.1)--(\i-1,-.1) node[below] {$\i$};
            \draw[very thin](0,0)--(5,0){};
            \draw[very thick] (1,0) arc (0:180:0.5);
            \draw[very thick] (3,0) arc (0:180:0.5);
            \draw[very thick] (5,0) arc (0:180:0.5);
    \end{tikzpicture}
}
\end{subfloat}
\hspace{10pt}       
\begin{subfloat} {
\begin{tikzpicture}
            \foreach \i in {1,2,...,6}
        \draw[black!65] (\i-1,.1)--(\i-1,-.1) node[below] {$\i$};
            \draw[very thin](0,0)--(5,0){};
            \draw[very thick] (1,0) arc (0:180:0.5);
            \draw[very thick] (5,0) arc (0:180:1.5);
            \draw[very thick] (4,0) arc (0:180:0.5);
    \end{tikzpicture}
}     
\end{subfloat}

\hspace{10pt}       
\begin{subfloat} {
\begin{tikzpicture}
            \foreach \i in {1,2,...,6}
        \draw[black!65] (\i-1,.1)--(\i-1,-.1) node[below] {$\i$};
            \draw[very thin](0,0)--(5,0){};
            \draw[very thick] (3,0) arc (0:180:0.5);
            \draw[very thick] (5,0) arc (0:180:2.5);
            \draw[very thick] (4,0) arc (0:180:1.5);
    \end{tikzpicture}
}     
\end{subfloat}
\hspace{10pt}       
\begin{subfloat} {
\begin{tikzpicture}
            \foreach \i in {1,2,...,6}
        \draw[black!65] (\i-1,.1)--(\i-1,-.1) node[below] {$\i$};
            \draw[very thin](0,0)--(5,0){};
            \draw[very thick] (2,0) arc (0:180:0.5);
            \draw[very thick] (5,0) arc (0:180:2.5);
            \draw[very thick] (4,0) arc (0:180:0.5);
    \end{tikzpicture}
}     
\end{subfloat}
    \caption*{Examples}
    \label{fig:my_label}
\end{figure}
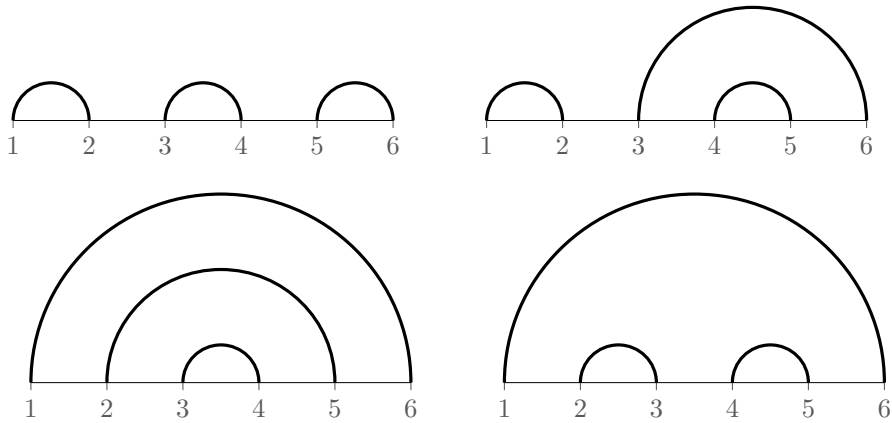

We will call such matchings on $2N$ vertices to be \emph{noncrossing  matchings on $2N$}. Note that the noncrossing matchings on $2N$ are uniquely represented by the set of its edges, e.g. the set $\{\{1,2\}, \{3,4\}, \{5,6\}\}$ uniquely represents this noncrossing matching:-

\begin{figure}[H]
    \centering
    \begin{tikzpicture}
            \foreach \i in {1,2,...,6}
        \draw[black!65] (\i-1,.1)--(\i-1,-.1) node[below] {$\i$};
            \draw[very thin](0,0)--(5,0){};
            \draw[very thick] (1,0) arc (0:180:0.5);
            \draw[very thick] (3,0) arc (0:180:0.5);
            \draw[very thick] (5,0) arc (0:180:0.5);
    \end{tikzpicture}
    \label{fig:my_label}
\end{figure}

\begin{rem} \label{rem: nested property}
    If $\{\{a_n, b_n\} : n \in \{1,\cdots, N\}\}$ represents a matching on $2N$, then $a_n + 1 \in \{a_1, \cdots, a_N\}\cup \{b_n\}$ and $b_n -1 \in \{b_1, \cdots, b_N\}\cup \{a_n\}$ for any $n \in \{1,\cdots,N\}$.
\end{rem}

We will now show how one can calculate some terms in the sequence $(t(k,N): k,N\in \N)$ by counting these noncrossing matchings.

\begin{prop}
    For any $N \in \N$, $\Bar{t}(2N,0)$ equals the number of noncrossing  matchings on $2N$.
\end{prop}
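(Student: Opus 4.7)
\medskip

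\noindent\textbf{Proof proposal.} The plan is to exhibit an explicit bijection $\Phi$ between Dyck paths of length $2N$ (the Catalan paths counted by $\Bar{t}(2N,0)$) and noncrossing matchings on $2N$ vertices. Since Proposition \ref{prop: t(k,N) and Catalan paths} already identifies $\Bar{t}(2N,0)$ with the number of such Dyck paths, constructing any explicit bijection suffices.

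First I would read a Dyck path $f : \{0,1,\dots,2N\} \to \Z$ as a word of $2N$ steps, labelling the $i$-th step as an \emph{up}-step if $f(i) = f(i-1)+1$ and a \emph{down}-step if $f(i) = f(i-1)-1$. Since $f(0) = f(2N) = 0$ and $f$ stays nonnegative, the number of up-steps equals the number of down-steps equals $N$. I then define $\Phi(f)$ by the standard stack-matching rule: scan $i = 1, 2, \dots, 2N$ keeping a stack of indices; push $i$ when step $i$ is an up-step, and when step $i$ is a down-step, pop the top index $j$ and create the arc $\{j, i\}$. Because the path is nonnegative, whenever a down-step occurs the stack is nonempty; because the path ends at $0$, the stack is empty at the end, so $\Phi(f)$ is a complete matching of $\{1,\dots,2N\}$.

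Next I would verify that $\Phi(f)$ is noncrossing. Suppose two arcs $\{a,b\}$ and $\{c,d\}$ of $\Phi(f)$ satisfied $a < c < b < d$. At the moment the down-step at position $b$ is processed, the index popped from the stack is the most recently pushed up-step index still unmatched. Since $c$ is an up-step pushed after $a$ and not yet matched at time $b$ (it will be matched with $d > b$), the top of the stack at time $b$ would be $c$, not $a$, contradicting $\{a,b\} \in \Phi(f)$. Hence $\Phi(f)$ is noncrossing.

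Finally I would describe the inverse map $\Psi$ and check that $\Psi \circ \Phi = \mathrm{id}$ and $\Phi \circ \Psi = \mathrm{id}$. Given a noncrossing matching $M$ on $2N$ vertices, label each vertex that is the smaller endpoint of its arc as an up-step and each larger endpoint as a down-step, and let $\Psi(M)$ be the lattice path starting at $(0,0)$ with these step types. To see that $\Psi(M)$ is a Dyck path, I observe that after the first $i$ steps the height equals (number of arcs with left endpoint $\le i$ and right endpoint $> i$); this is nonnegative by construction, and it is zero at $i = 2N$ since all arcs are closed. The two compositions are mutual inverses because both constructions are dictated coordinate-wise by the same up/down labelling, and the stack-matching recovers exactly the unique noncrossing pairing compatible with that labelling (Remark \ref{rem: nested property} guarantees uniqueness of this recovery). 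The main subtlety is the crossing-versus-nonnegativity argument in the middle step, so that is where I would take the most care.
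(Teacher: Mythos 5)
Your proof is correct, and the underlying bijection is mathematically the same as the paper's: in both, an up-step at position $a$ is matched with the first later position $b$ at which the path dips to $f(a)-1$, equivalently the first $b>a$ with $f(b)<f(a)$. Where you differ is in presentation, and your stack formulation is genuinely cleaner. The paper builds the pairs $(a_n,b_n)$ by an explicit "leftmost unmatched" induction and then spends a full page verifying a technical claim (Property (3)) that $b_n$ has not already been used; with the stack invariant this is automatic, since the stack contains precisely the currently unmatched up-steps and nonnegativity of the path is exactly the assertion that the stack is nonempty at every down-step. Your noncrossing argument (at time $b$, the top of the stack is $c$, not $a$) is tighter than the paper's two-case contradiction. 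One small caution: the appeal to Remark \ref{rem: nested property} for uniqueness in the $\Phi\circ\Psi=\mathrm{id}$ step is not quite the right lemma; what you actually need (and what your own crossing argument already furnishes) is that in any noncrossing matching each right endpoint must be joined to the nearest preceding unmatched left endpoint, since any other choice produces a crossing. State that directly rather than citing the remark, and the proof is complete.
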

\begin{proof}
    In this proof, we will show a one-one correspondence between noncrossing matchings on $2N$ and Dyck paths ending at $(2N,0)$. Fix any Dyck path $f$ of length $2N$ ending at $(2N, 0)$. We will define a sequence $((a_n, b_n): n \in \{1,2, \cdots, N\})$ by induction such that it follow these properties:
    \begin{enumerate}
        \item For every $n \in \{ 1,2 \cdots, N\}$, $1 \leq a_n < b_n \leq 2N$;
        \item For every $n \in \{ 1,2 \cdots, N\}$, $\{a_1,a_2, \cdots ,a_{n-1}\} \subseteq \{1,2,\cdots, a_n \} \subseteq \{a_1,a_2, \cdots ,a_n\}\cup \{b_1,b_2, \cdots, b_n\}$;
        \item For every $n \in \{ 1,2 \cdots, N\}$, $a_n,b_n\notin\{a_1,a_2, \cdots ,a_{n-1}\}\cup \{b_1,b_2, \cdots, b_{n-1}\} $;
        \item For every $n \in \{ 1,2 \cdots, N\}$, $b_n$ is the least element greater than $a_n$ where $f(b_n)<f(a_n)$ (note that this will imply $f(b_n) = f(a_n)-1$).
    \end{enumerate}

\vspace{3mm}

    \textbf{Base case.} Define $a_1 := 1$ and take $b_1$ to be the least element greater than $1$ where $f(b_n)=0$.

    \textbf{Induction Step.} Define $a_n := min\left[\{1,2, \cdots, 2N\}\setminus \left(\{a_1,a_2, \cdots ,a_{n-1}\}\cup \{b_1,b_2, \cdots, b_{n-1}\}\right)\right]$ and take $b_n$ to be the least element greater than $a_n$ where $f(b_n)<f(a_n)$. Note that $(a_n,b_n)$ follow property $(1)$, $(2)$ and $(4)$.

    \noindent{}\textbf{Claim.} $(a_n,b_n)$ follows Property $(3)$.
    
    \noindent{}\textit{Proof of the claim.} Note that $a_n\notin\{a_1,a_2, \cdots ,a_{n-1}\}\cup \{b_1,b_2, \cdots, b_{n-1}\} $ follows from the definition of $a_n$. Since $(a_n,b_n)$ follows Property $(1)$ and $(2)$, we get $b_n \notin \{a_1,a_2, \cdots ,a_{n-1}\}$. Towards a contradiction assume $b_n \in \{b_1,b_2, \cdots, b_{n-1}\}$. Take $m \in \{1,2, \cdots, n-1\}$ such that $b_{m} = b_n$. Since $a_n\notin\{a_1,a_2, \cdots ,a_{n-1}\}\cup \{b_1,b_2, \cdots, b_{n-1}\}$, Property $(2)$ yields that $a_{m}<a_n$. Note that $f(a_{m}) = f(b_{m})+1 = f(b_n) + 1= f(a_n )$. Define an sequence $(m_k : k\in \N)$ such that $a_m<  a_{m_k} < a_{m_{k+1}}<a_n$ and $f(b_{m_k} ) = f(a_n)$ for every $k \in \N$.
    
    \textbf{Base case.} Since $f(a_m) = f(a_n)$ and $a_m < a_n$, Remark \ref{rem: Parity of difference} yields $a_n \geq a_m +2$. Property $(2)$ yields $a_m+1 \in \{a_1,a_2, \cdots ,a_{n-1}\}\cup \{b_1,b_2, \cdots, b_{n-1}\}$. Since $a_m < a_m + 1< a_n < b_n=b_m$, Property $(4)$ yields that $f(a_m + 1)\geq f(a_m)$, and hence $f(a_m+1) = f(a_m)+1$. We claim that $a_m + 1 \in \{a_1,a_2, \cdots ,a_{n-1}\}$ and define $m_1 \in \{ 1,\cdots, n-1\}$ such that $a_{m_1} = a_m + 1$. Towards a contradiction take $p \in \{1,2, \cdots, n-1\}$ such that $b_p = a_m + 1$. Thus $a_p\leq a_m < b_p $ and $f(a_m)< f(a_m+1) = f(b_p)< f(a_p)$, which is a contradiction to Property $(4)$. Note that Property $(4)$ also yields $f(b_{m_1}) = f(a_{m_1})-1 = f(a_m+1)-1 = f(a_m) = f(a_n)$.

    \textbf{Induction step.} Since $f(a_n) = f(b_{m_k} )< f(a_{m_k})$ and $b_{m_k} \neq  a_n$, Property $(4)$ yields $b_{m_k}<a_n$ and moreover Remark \ref{rem: Parity of difference} yields $a_n \geq b_{m_k}+2$. Since $a_m < b_{m_k} + 1< a_n < b_n=b_m $, Property $(4)$ yields $f(b_{m_k}+1)\geq f(a_m) = f(b_{m_k})$ and hence $f(b_{m_k} + 1) = f(b_{m_k})+1 = f(a_n)+1$. Following the similar arguments we made in the previous case, we can say that $b_{m_k} + 1 \in \{a_1,a_2, \cdots ,a_{n-1}\}$. Define $m_{k+1} \in \{1,2, \cdots, n-1\}$ such that $a_{m_{k+1}} = b_{m_k}+1$. Thus $f(b_{m_{k+1}}) = f(a_{m_{k+1}})-1 = f(b_{m_k}+1)-1 = f(a_n)$.

\vspace{1.5mm}
    
    Thus $(a_{m_k}: k \in \N)$ is an infinite increasing sequence in $\{ a_m+1, a_m+2, \cdots, a_n\}$, which is a contradiction as $\{ a_m+1, a_m+2, \cdots, a_n\}$ is a finite set. \hfill$\blacksquare$ 

\vspace{4mm}
 
Define $F(f) := \{\{\{a_1,b_1\},\{a_2, b_2\},\cdots , \{a_N , b_N\} \}\}$. Since $\{a_n : n \in \{1,2,\cdots n\}\} \cup \{b_n : n \in \{1,2,\cdots n\}\}$ is a set of $2N$ distinct elements in $\{1,2,\cdots 2N\}$, we get that $F(f)$ defines a \emph{matching} on $\{1,2,\cdots, 2N\}$. We claim that $F(f)$ represents a \emph{noncrossing matching on $2N$}. Towards a contradiction, assume there exists $i,j \in \{1,2,\cdots , N\}$ such that $a_i < a_j< b_i < b_j$. We will take two cases.
\begin{itemize}
    \item If $f(b_i)\leq f(b_j)$ then this is a contradiction to Property $(4)$ as $a_j<b_i<b_j$.
    \item If $f(b_i) > f(b_j)$ then Property (4) yields $f(a_i) = f(b_i) + 1 > f(b_j) + 1 = f(a_j)$, which is a contradiction to Property $(4)$ as $a_i < a_j < b_i$.
\end{itemize}

    \begin{tikzpicture}
        \draw[very thin,color=gray!15,step=1] (0,0) grid (6,1);
        \draw[-latex] (0,0) -- (6.2,0) node[right] { };
        \draw[-latex] (0,0) -- (0,1.2) node[above] {};
        \coordinate [label=below left:O] (a) at (0,0);
        \foreach \i in {1,2,...,6}
        \draw[gray!65] (\i,.1)--(\i,-.1) node[below] {$\i$};
        \foreach \i in {1}
        \draw[gray!65] (.1,\i)--(-.1,\i) node[left] {$\i$};
        \draw[thick] (0,0)--(1,1){};
        \draw[thick] (1,1)--(2,0){};
        \draw[thick] (2,0)--(3,1){};
        \draw[thick] (3,1)--(4,0){};
        \draw[thick] (4,0)--(5,1){};
        \draw[thick] (5,1)--(6,0){};
        \draw [-stealth](6.25,0.5) -- (8.2,0.5);
        \foreach \i in {1,2,...,6}
        \draw[black!65] (\i-1+8.5,.1)--(\i-1+8.5,-.1) node[below] {$\i$};
            \draw[very thin](0+8.5,0)--(5+8.5,0){};
            \draw[very thick] (1+8.5,0) arc (0:180:0.5);
            \draw[very thick] (3+8.5,0) arc (0:180:0.5);
            \draw[very thick] (5+8.5,0) arc (0:180:0.5);
\end{tikzpicture}\\
\begin{tikzpicture}
        \draw[very thin,color=gray!15,step=1] (0,0) grid (6,3);
        \draw[-latex] (0,0) -- (6.2,0) node[right] { };
        \draw[-latex] (0,0) -- (0,3.2) node[above] {};
        \coordinate [label=below left:O] (a) at (0,0);
        \foreach \i in {1,2,...,6}
        \draw[gray!65] (\i,.1)--(\i,-.1) node[below] {$\i$};
        \foreach \i in {1,2,3}
        \draw[gray!65] (.1,\i)--(-.1,\i) node[left] {$\i$};
        \draw[thick] (0,0)--(1,1){};
        \draw[thick] (1,1)--(2,2){};
        \draw[thick] (2,2)--(3,3){};
        \draw[thick] (3,3)--(4,2){};
        \draw[thick] (4,2)--(5,1){};
        \draw[thick] (5,1)--(6,0){};
        \draw [-stealth](6.25,1.5) -- (8.2,1.5);
        \foreach \i in {1,2,...,6}
        \draw[black!65] (\i-1+8.5,.1)--(\i-1+8.5,-.1) node[below] {$\i$};
            \draw[very thin](0+8.5,0)--(5+8.5,0){};
            \draw[very thick] (4+8.5,0) arc (0:180:1.5);
            \draw[very thick] (5+8.5,0) arc (0:180:2.5);
            \draw[very thick] (3+8.5,0) arc (0:180:0.5);
\end{tikzpicture}

    \begin{tikzpicture}
    
        \draw[very thin,color=gray!15,step=1] (0,0) grid (6,2);
        \draw[-latex] (0,0) -- (6.2,0) node[right] { };
        \draw[-latex] (0,0) -- (0,2.2) node[above] {};
        \coordinate [label=below left:O] (a) at (0,0);
        \foreach \i in {1,2,...,6}
        \draw[gray!65] (\i,.1)--(\i,-.1) node[below] {$\i$};
        \foreach \i in {1,2}
        \draw[gray!65] (.1,\i)--(-.1,\i) node[left] {$\i$};
        \draw[thick] (0,0)--(1,1){};
        \draw[thick] (1,1)--(2,0){};
        \draw[thick] (2,0)--(4,2){};
        \draw[thick] (4,2)--(5,1){};
        \draw[thick] (5,1)--(6,0){};
        \draw [-stealth](6.25,1) -- (8.2,1);
        \foreach \i in {1,2,...,6}
        \draw[black!65] (\i-1+8.5,.1)--(\i-1+8.5,-.1) node[below] {$\i$};
            \draw[very thin](8.5,0)--(13.5,0){};
            \draw[very thick] (9.5,0) arc (0:180:0.5);
            \draw[very thick] (13.5,0) arc (0:180:1.5);
            \draw[very thick] (12.5,0) arc (0:180:0.5);
    \end{tikzpicture}

    \begin{tikzpicture}
    
        \draw[very thin,color=gray!15,step=1] (0,0) grid (6,2);
        \draw[-latex] (0,0) -- (6.2,0) node[right] { };
        \draw[-latex] (0,0) -- (0,2.2) node[above] {};
        \coordinate [label=below left:O] (a) at (0,0);
        \foreach \i in {1,2,...,6}
        \draw[gray!65] (\i,.1)--(\i,-.1) node[below] {$\i$};
        \foreach \i in {1,2}
        \draw[gray!65] (.1,\i)--(-.1,\i) node[left] {$\i$};
        \draw[thick] (0,0)--(2,2){};
        \draw[thick] (2,2)--(4,0){};
        \draw[thick] (4,0)--(5,1){};
        \draw[thick] (5,1)--(6,0){};
        \draw [-stealth](6.25,1) -- (8.2,1);
        \foreach \i in {1,2,...,6}
        \draw[black!65] (\i-1+8.5,.1)--(\i-1+8.5,-.1) node[below] {$\i$};
            \draw[very thin](8.5,0)--(13.5,0){};
            \draw[very thick] (13.5,0) arc (0:180:0.5);
            \draw[very thick] (11.5,0) arc (0:180:1.5);
            \draw[very thick] (10.5,0) arc (0:180:0.5);
    \end{tikzpicture}

    \begin{tikzpicture}
        \draw[very thin,color=gray!15,step=1] (0,0) grid (6,2);
        \draw[-latex] (0,0) -- (6.2,0) node[right] { };
        \draw[-latex] (0,0) -- (0,2.2) node[above] {};
        \coordinate [label=below left:O] (a) at (0,0);
        \foreach \i in {1,2,...,6}
        \draw[gray!65] (\i,.1)--(\i,-.1) node[below] {$\i$};
        \foreach \i in {1,2}
        \draw[gray!65] (.1,\i)--(-.1,\i) node[left] {$\i$};
        \draw[thick] (0,0)--(1,1){};
        \draw[thick] (1,1)--(2,2){};
        \draw[thick] (2,2)--(3,1){};
        \draw[thick] (3,1)--(4,2){};
        \draw[thick] (4,2)--(5,1){};
        \draw[thick] (5,1)--(6,0){};
        \draw [-stealth](6.25,1.25) -- (8.2,1.25);
        \foreach \i in {1,2,...,6}
        \draw[black!65] (\i-1+8.5,.1)--(\i-1+8.5,-.1) node[below] {$\i$};
            \draw[very thin](0+8.5,0)--(5+8.5,0){};
            \draw[very thick] (2+8.5,0) arc (0:180:0.5);
            \draw[very thick] (5+8.5,0) arc (0:180:2.5);
            \draw[very thick] (4+8.5,0) arc (0:180:0.5);
\end{tikzpicture}

We will now show that $F$ is a bijective mapping from set of Dyck paths to the set of \emph{noncrossing matchings}.  For this we will define a map $G: \{\text{Non crossing matchings on }2N\} \to \{\text{Dyck paths to }(2N,0)\}$ such that $G \circ F = Id$ and $F \circ G = Id$. Fix any \emph{noncrossing matchings on $2N$}. Let $\{\{a_n,b_n\}: n \in \{1,2,\cdots, N\}\}$ represents that noncrossing matching. Without loss of generality, assume $a_n < a_{n+1}$ and $a_n < b_n$ for every $n$. Define a sequence of functions $(f_n : n \in \N )$ such that for every $n \in \{1,2,\cdots, N\}$, $\mathrm{Dom}(f_n) = \{0\}\cup \bigcup \limits_{k=1}^n \{a_k,b_k\}  $ and $\mathrm{Img}(f_n) \subseteq \N_0$.

\textbf{Base Case.} Define $f_1 : \{0,a_1, b_1\} \to \N_0$ such that $f(0) = f(b_1) = 0$ and $f(a_1) = 1$.

\vspace{-2mm}

\textbf{Induction Step.} Since $a_k < a_{k+1}<b_{k+1}$ for every $k \in \{1,2,\cdots, N-1\}$, we get that $\left(\bigcup \limits_{i=k}^n \{a_k,b_k\}\right) \cap \{1,2,\cdots, a_k-1\} = \emptyset$. Thus $\{ 1,2,\cdots, a_k-1\} \subseteq \left(\bigcup \limits_{i=1}^{k-1} \{a_k,b_k\}\right) $ for every $k \in \{1,2,\cdots, N\}$. Define $f_{n+1} : \{0\}\cup \bigcup \limits_{k=1}^{n+1} \{a_k,b_k\}  \to \N_0$ such that $f_{n+1}\mid_{\mathrm{Dom}(f_n)} = f_n$, $f_{n+1} (a_{n+1}) = f_{n+1} (a_{n+1}-1)+1$ and $f_{n+1} (b_{n+1}) = f(a_{n+1})-1$. 

\vspace{4mm}

By an inductive argument we can show that $f_N$ is a Dyck path. Define $G(\{\{a_n,b_n\}: n \in \{1,2,\cdots, N\}\}) = f_N$. Now by a trivial induction argument we can show that $F \circ G = Id$ and $G \circ F = Id$. Thus $F$ is a bijective. Hence $\Bar{t}(2N,0)$ equals the number of noncrossing  matchings on $2N$.
\end{proof}

\begin{cor}
    For any $N \in \N$, $t(0,2N)$ equals the number of noncrossing  matchings on $2N$.
\end{cor}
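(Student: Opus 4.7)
The statement is a direct combination of two results already established in the excerpt, so the plan is essentially a one-line deduction dressed up as a short paragraph.

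The plan is to invoke Proposition \ref{prop: t(k,N) and Catalan paths}, which equates $t(k,N)$ with $\Bar{t}(N,k)$ for all $k \in \N_0$ and $N \in \N$. Specializing to $k = 0$ and $N \mapsto 2N$, I get
\[
t(0, 2N) = \Bar{t}(2N, 0).
\]
Then I apply the immediately preceding proposition, which asserts that $\Bar{t}(2N, 0)$ (the number of Dyck paths of length $2N$ ending at the origin) equals the number of noncrossing matchings on $2N$ vertices. Chaining the two equalities gives the conclusion.

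Since the content of the corollary is entirely subsumed by these two prior results, there is no real obstacle: the work lies in the bijection $F$ between Dyck paths and noncrossing matchings already constructed in the previous proof, and in the inductive identification $t(k,N) = \Bar{t}(N,k)$. The only minor sanity check worth flagging is that the parity condition is consistent: a Dyck path of length $2N$ ending at height $0$ exists because $2N \equiv 0 \pmod 2$, matching Remark \ref{rem: Parity of difference}, so both sides are genuinely enumerating a nonempty class of objects (in fact, both are equal to the $N$th Catalan number). No new combinatorial construction or calculation is needed beyond citing the two results.
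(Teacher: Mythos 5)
Your proposal is correct and matches the paper's (implicit, since the corollary is stated without a written proof) reasoning exactly: combine Proposition \ref{prop: t(k,N) and Catalan paths} giving $t(0,2N) = \Bar{t}(2N,0)$ with the immediately preceding proposition identifying $\Bar{t}(2N,0)$ with the number of noncrossing matchings on $2N$.
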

\bibliographystyle{plain}

\section{The Algebra $U_q (\SL)$}\label{Quantum group}
In this section, we will describe tilting modules corresponding to the restricted specialisation of quantised enveloping algebras. These tilting modules are central to the results proved in the following sections. Initially, we will recall the definition of a one-parameter deformation of the quantised universal  enveloping algebra of $\SL$, i.e. $\UU (\SL) $. This deformation also comes with a Hopf algebra structure. Fix a formal parameter $q $.

\begin{defn}\cite[\S~1.1]{jantzen1996lectures}
    $(\UU_q(\SL),+,\cdot)$ is defined as an associative algebra generated by $E,F,K$ and $K^{-1}$ with the following relations:
    \begin{align}
        KK^{-1} = &1 = K^{-1}K \tag{E1}\\
        KEK^{-1} &= q^2 E\tag{E2}\\
        KFK^{-1} &= q^{-2}F\tag{E3}\\
        EF-FE &= \frac{K-K^{-1}}{q - q^{-1}}\tag{E4}
    \end{align}
\end{defn}

\noindent{}As stated earlier in the section, $U_q(\SL)$ comes with a Hopf Algebra structure.

\begin{prop}\cite[\S~3]{jantzen1996lectures}
    There exists a unique algebra homomorphism $\Delta:\UU_q(\SL) \to \UU_q (\SL) \otimes \UU_q(\SL)$, $\varepsilon : \UU_q(\SL) \to \C$ and an anti-automorphism $S: \UU_q(\SL) \to \UU_q(\SL) $ such that, 
    \begin{gather*}
        \left(\UU_q (\SL), +, \cdot, \Delta, \varepsilon, S\right) \text{ is a Hopf Algebra; }\\
        \Delta(E) = K \otimes E + E \otimes 1 ,\, \Delta(F) = 1 \otimes F + F \otimes K^{-1},\, \Delta(K) = K \otimes K;\\
        \varepsilon(E) = \varepsilon(F) = 0 ,\, \varepsilon(K) = 1;\\
        S(E) = -K^{-1}E,\, S(F) = -FK, \, S(K) = K^{-1}.
    \end{gather*}
\end{prop}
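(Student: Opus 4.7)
\medskip

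\noindent\textbf{Proof proposal.} The overall plan is to exploit the presentation of $\UU_q(\SL)$ by generators and relations: once the target maps are specified on $E,F,K,K^{\pm 1}$, uniqueness is automatic (the generators span the algebra under products), while existence reduces to checking that each of the defining relations (E1)--(E4) is respected. Thus I would first extend $\Delta$ to $K^{-1}$ by $\Delta(K^{-1})=K^{-1}\otimes K^{-1}$ (forced by multiplicativity and (E1)), and similarly set $\varepsilon(K^{-1})=1$ and $S(K^{-1})=K$. The unique algebra map (resp.\ anti-algebra map) from the free algebra on $E,F,K,K^{\pm 1}$ to the relevant target is then declared, and one verifies it descends to the quotient $\UU_q(\SL)$.

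\smallskip

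\noindent For $\Delta$, I would view it as a homomorphism from the free algebra into $\UU_q(\SL)\otimes\UU_q(\SL)$ and plug the given formulas into each of (E1)--(E4). The relations (E1)--(E3) become short computations using $(X\otimes Y)(X'\otimes Y')=XX'\otimes YY'$; e.g.\ $\Delta(K)\Delta(E)\Delta(K^{-1})=(K\otimes K)(K\otimes E+E\otimes 1)(K^{-1}\otimes K^{-1})=K\otimes q^{2}E+q^{2}E\otimes 1=q^{2}\Delta(E)$. Relation (E4) is the only substantive check and is handled by expanding
\[
\Delta(E)\Delta(F)-\Delta(F)\Delta(E)=(K\otimes E+E\otimes 1)(1\otimes F+F\otimes K^{-1})-(1\otimes F+F\otimes K^{-1})(K\otimes E+E\otimes 1),
\]
using $KF=q^{-2}FK$ and $EK^{-1}=q^{2}K^{-1}E$ to collapse the cross terms, and identifying the result with $\tfrac{K-K^{-1}}{q-q^{-1}}\otimes 1+1\otimes\tfrac{K-K^{-1}}{q-q^{-1}}=\Delta\!\bigl(\tfrac{K-K^{-1}}{q-q^{-1}}\bigr)$. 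For $\varepsilon$, the relations trivialise: (E2),(E3) give $1\cdot 0\cdot 1 = q^{\pm 2}\cdot 0$ and (E4) gives $0-0=\tfrac{1-1}{q-q^{-1}}$.

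\smallskip

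\noindent For $S$, I treat it as an algebra homomorphism $\UU_q(\SL)\to\UU_q(\SL)^{\mathrm{op}}$ and verify (E1)--(E4) with the multiplication reversed. Relations (E1)--(E3) are short: for instance $S(K^{-1})S(E)S(K)=K(-K^{-1}E)K^{-1}=-EK^{-1}=-q^{2}K^{-1}E=q^{2}S(E)$, matching $S(q^{2}E)$. The real work is (E4): expanding $S(F)S(E)-S(E)S(F)=(-FK)(-K^{-1}E)-(-K^{-1}E)(-FK)$ and using $FK=q^{2}KF$, $EK=q^{-2}KE$ yields $FE-EF=-\tfrac{K-K^{-1}}{q-q^{-1}}=S\!\bigl(\tfrac{K-K^{-1}}{q-q^{-1}}\bigr)$, as required. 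This is the step I expect to be the most error-prone, because the anti-algebra convention flips every product and one must commute $K$'s through $E,F$ in the correct direction; I would do it carefully with the identities $EK^{\pm 1}=q^{\mp 2}K^{\pm 1}E$ and $FK^{\pm 1}=q^{\pm 2}K^{\pm 1}F$ written out first.

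\smallskip

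\noindent With $\Delta,\varepsilon,S$ in hand, the Hopf algebra axioms (coassociativity, counit, antipode) are all algebra (or anti-algebra) identities; since both sides of each axiom are homomorphisms (or anti-homomorphisms) into the same algebra, it suffices to check them on the generating set $\{E,F,K,K^{-1}\}$. Coassociativity on $K$ is immediate from $(K\otimes K\otimes K)$, and on $E$ it reads $(\Delta\otimes\mathrm{id})(K\otimes E+E\otimes 1)=K\otimes K\otimes E+K\otimes E\otimes 1+E\otimes 1\otimes 1=(\mathrm{id}\otimes\Delta)(K\otimes E+E\otimes 1)$, with an analogous check for $F$. The counit axiom reduces on each generator to a one-line verification, and the antipode axiom $m(S\otimes\mathrm{id})\Delta=\eta\varepsilon=m(\mathrm{id}\otimes S)\Delta$ is checked by $SK\cdot K=1=K\cdot SK$ and $S(K)E+S(E)\cdot 1=K^{-1}E-K^{-1}E=0=\varepsilon(E)$, together with the mirror computation for $F$. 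Uniqueness of all three maps then follows at once from the fact that $E,F,K,K^{-1}$ generate $\UU_q(\SL)$ as an algebra, completing the proof.
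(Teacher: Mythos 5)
The paper states this Proposition as a cited fact (Jantzen, \emph{Lectures on Quantum Groups}, \S 3) and gives no proof of its own, so there is no in-paper argument to compare against; what you have done is reconstruct the standard generators-and-relations verification, which is the right approach and is essentially what one finds in Jantzen and Kassel. Your overall architecture is sound: uniqueness from generation, existence by checking that each map (defined on the free algebra) kills the defining relations, viewing $S$ as an algebra map into $\UU_q(\SL)^{\mathrm{op}}$, and then verifying the Hopf axioms only on generators because both sides of each axiom are (anti-)algebra maps.

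There is, however, one concrete slip in the (E4) check for $\Delta$. After the cross terms $KF\otimes EK^{-1}$ and $FK\otimes K^{-1}E$ cancel (which you argue correctly), what remains is
\[
K\otimes(EF-FE)+(EF-FE)\otimes K^{-1}
= \frac{K\otimes K - K\otimes K^{-1}}{q-q^{-1}} + \frac{K\otimes K^{-1}-K^{-1}\otimes K^{-1}}{q-q^{-1}}
= \frac{K\otimes K - K^{-1}\otimes K^{-1}}{q-q^{-1}},
\]
and this last expression is indeed $\Delta\!\left(\tfrac{K-K^{-1}}{q-q^{-1}}\right)$ since $\Delta$ is multiplicative and group-like on $K^{\pm 1}$. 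You instead wrote that the result should be ``$\tfrac{K-K^{-1}}{q-q^{-1}}\otimes 1+1\otimes\tfrac{K-K^{-1}}{q-q^{-1}}=\Delta\!\bigl(\tfrac{K-K^{-1}}{q-q^{-1}}\bigr)$,'' which is wrong on both counts: the computed commutator does not take that primitive form, and $\Delta(K-K^{-1})$ is $K\otimes K-K^{-1}\otimes K^{-1}$, not $(K-K^{-1})\otimes 1+1\otimes(K-K^{-1})$ ($K$ is group-like, not primitive). The telescoping identity above is the step you need; with it, the rest of your verification (including the $S$-relations and the antipode axiom checked generator by generator) goes through and gives a complete proof.
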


We will now consider a case when $q$ is an odd root of unity. Assume $l$ to be the least natural number such that $q^l = 1$. For any $s \in \N$, define $$ [s]:= \frac{q^s-q^{-s}}{q-q^{-1}} \text{ and }[s]! = \prod_{i=1}^s [i] .$$  Chari and Presley\cite[\S~9.3]{chari1995guide} define a restricted specialisation $U_q^{\mathrm{res}}(\SL)$ of $U_q(\SL)$ as a subsalgebra generated by $\{K,K^{-1}\}\cup \left\{\frac{E^s}{[s]!}: s \in \N\right\} \cup \left\{\frac{F^r}{[r]!}: r \in \N\right\}$. They are also called the Lusztig's divided power subalgebra of $U_q(\SL).$ Define $W_q^{\mathrm{res}}(\lambda)$ \cite[\S~11.2]{chari1995guide} as a highest weight $U_q^{\mathrm{res}}(\SL)-$modules with highest weight $\lambda \in P$. A module $V$ is said to be type $\mathbf{1}$ module \cite[\S~4.6]{lusztig1989modular} if $K^l v = v$ for every $v \in V.$

\begin{defn}[Weyl filtration]\cite[\S~11.3.1]{chari1995guide}
    A finite-dimensional $U_q^{\mathrm{res}}(\SL)-$module $V$ of type $\mathbf{1}$ has a Weyl filtration if there exists a sequence of submodules $$ 0=V_0 \subset V_1 \subset V_2\subset \cdots \subset V_p = V$$ with $V_r / V_{r-1} \cong W_q^{\mathrm{res}}(\lambda_r)$ for some $\lambda_r \in P^+$ and $r \in \{1,2,\cdots, p\}$, where $P^+$ is the set of positive roots of the representation.
\end{defn}

After establishing the theoretical foundation we can finally give a definition of tilting modules.

\begin{defn}[Tilting modules]
    A tilting modules is a finite-dimensional $U^{\mathrm{res}}_q(\SL)-$module of type $\mathbf{1}$ such that both the module and its dual have Weyl filtrations.
\end{defn}
 
Chari and Presley \cite[\S~11.3.4]{chari1995guide} states that the for any given $k \in P^{+}$ there exists a unique indecomposable tilting module (up to isomorphism) of highest weight $k$. Define $T(k)$ as the unique indecomposable tilting module over $\UU_q^{\mathrm{res}} (\SL)$ of type $\mathbf{1}$ with maximum weight k. Similar to Clebsch-Gordan formulae in Proposition \ref{clebsch-gordan}, we also have an explicit formulae for direct sum decomposition of the tensor product of two indecomposable tilting module of type $\mathbf{1}$ as given below.

\begin{prop}\cite[\S~6]{anna2024multiplicities}\label{recurrence for tilting modules}
    For any $k_1 \in \N$ and $k_0 \in \{1,2,\cdots , l-3\}$, we have 
    \begin{align*}
        T(k_0 )\otimes T(1) &\cong T(k_0 + 1) \oplus T(k_0 -1)\\
        T(lk_1 + k_0) \otimes T(1) &\cong T(lk_1 + k_0 -1) \oplus T(lk_1 + k_0 + 1)\\
        T(lk_1 + 2l-2) \otimes T(1) &\cong T(lk_1 + 2l-3) \oplus T(lk_1 +2l -1 ) \oplus T(lk_1 - 1)\\
        T(2l-2) \otimes T(1) &\cong T(2l-1) \oplus T(2l - 3)\\
        T(l-2) \otimes T(1) &\cong T( l -1) \oplus T(l-3)\\
        T(lk_1 - 1)\otimes T(1) &\cong T(lk_1)\\
        T(lk_1)\otimes T(1) &\cong T(lk_1 + 1) \oplus \left(T(lk_1 - 1)\right)^{\oplus 2}
    \end{align*}
\end{prop}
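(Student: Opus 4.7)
The plan is to prove each isomorphism by character/filtration computation, relying on two standard inputs: the explicit structure of $T(k)$ as a $U_q^{\mathrm{res}}(\SL)$-module and the uniqueness of the decomposition into indecomposable tilting summands of any type-$\mathbf{1}$ module with Weyl and co-Weyl filtrations. The key structural input is that for $0 \leq k \leq l-1$ one has $T(k) = W_q^{\mathrm{res}}(k)$ (simple, since these weights sit in the closure of the fundamental alcove), while for $k = ml + r$ with $m \geq 1$ and $0 \leq r \leq l-2$ the module $T(k)$ admits a length-two Weyl filtration with top quotient $W_q^{\mathrm{res}}(ml + r)$ and submodule $W_q^{\mathrm{res}}(ml - r - 2)$, the lower weight being the reflection of the upper one through the affine wall at $ml - 1$. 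I would also use that the full subcategory of modules with a Weyl filtration is closed under tensoring with $T(1) = W_q^{\mathrm{res}}(1)$, and that on Weyl factors the classical Clebsch--Gordan rule $W_q^{\mathrm{res}}(\lambda) \otimes W_q^{\mathrm{res}}(1) \cong W_q^{\mathrm{res}}(\lambda + 1) \oplus W_q^{\mathrm{res}}(\lambda - 1)$ (with $W_q^{\mathrm{res}}(-1)=0$) survives at a root of unity.

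First I would settle the two ``generic'' lines and the boundary case $T(l-2) \otimes T(1)$. For $k_0 \in \{1,\dots,l-3\}$ both $T(k_0)$ and $T(k_0 \pm 1)$ are Weyl modules in the fundamental alcove, so Clebsch--Gordan gives the decomposition immediately. For $T(lk_1 + k_0) \otimes T(1)$ with $k_0 \in \{1,\dots,l-3\}$, I would tensor the length-two Weyl filtration of $T(lk_1 + k_0)$ with $W_q^{\mathrm{res}}(1)$, split each factor via Clebsch--Gordan, and regroup the four resulting Weyl modules into the length-two Weyl filtrations of $T(lk_1 + k_0 + 1)$ and $T(lk_1 + k_0 - 1)$; the index arithmetic is clean because $k_0 \pm 1 \in \{0,\dots,l-2\}$ keeps all weights inside a single open alcove, so no wall weight appears. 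The case $T(l-2) \otimes T(1)$ is an instance of the first family.

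The main obstacle is the three wall-crossing identities. For $T(lk_1 + 2l - 2) \otimes T(1)$, I would first rewrite $k = (k_1 + 1)l + (l-2)$, so its Weyl filtration has top $W_q^{\mathrm{res}}(lk_1 + 2l - 2)$ and submodule $W_q^{\mathrm{res}}(lk_1)$; after tensoring with $W_q^{\mathrm{res}}(1)$, the four Weyl factors are $W_q^{\mathrm{res}}(lk_1 + 2l - 1)$, $W_q^{\mathrm{res}}(lk_1 + 2l - 3)$, $W_q^{\mathrm{res}}(lk_1 + 1)$, and $W_q^{\mathrm{res}}(lk_1 - 1)$. Both $W_q^{\mathrm{res}}(lk_1 + 2l - 1)$ and $W_q^{\mathrm{res}}(lk_1 - 1)$ sit on walls and are therefore their own tilting modules $T(lk_1 + 2l - 1)$ and $T(lk_1 - 1)$, while the remaining two Weyl factors are precisely the top and bottom of $T(lk_1 + 2l - 3)$; this forces the claimed three-summand decomposition. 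For $T(lk_1 - 1) \otimes T(1)$, the LHS equals $W_q^{\mathrm{res}}(lk_1 - 1) \otimes W_q^{\mathrm{res}}(1) \cong W_q^{\mathrm{res}}(lk_1) \oplus W_q^{\mathrm{res}}(lk_1 - 2)$, which is exactly the Weyl filtration of $T(lk_1)$, so the product is indecomposable. Finally, for $T(lk_1) \otimes T(1)$ the wall weight $lk_1 - 1$ appears twice in the output of Clebsch--Gordan (once from each of the two Weyl factors of $T(lk_1)$), which is precisely what produces the multiplicity-two summand $T(lk_1 - 1)^{\oplus 2}$, while $W_q^{\mathrm{res}}(lk_1 + 1)$ and $W_q^{\mathrm{res}}(lk_1 - 3)$ assemble into $T(lk_1 + 1)$. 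The one nontrivial step each time is checking that the leftover Weyl factors are the right ones to build the enclosing tilting: this is pure bookkeeping around the affine walls, made unambiguous by the uniqueness of the indecomposable tilting with a given highest weight, and it is where I would spend most of the care.
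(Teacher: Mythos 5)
The paper does not prove this proposition --- it is quoted from Lachowska et al.~\cite{anna2024multiplicities} and cited without argument --- so there is no in-paper proof to compare your sketch against. That said, the route you describe is the standard one for quantum $\mathfrak{sl}_2$ at an $l$-th root of unity, and it is correct: tensoring with $T(1)=W_q^{\mathrm{res}}(1)$ preserves the category of modules with Weyl filtrations and acts on Weyl factors by the classical Clebsch--Gordan rule; $T(k)=W_q^{\mathrm{res}}(k)$ (simple and self-dual) for $0\le k\le l-1$; and for $k=ml+r$ with $m\ge 1$, $0\le r\le l-2$, the indecomposable tilting $T(k)$ has exactly the two Weyl factors $W_q^{\mathrm{res}}(ml+r)$ and $W_q^{\mathrm{res}}(ml-r-2)$. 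Your alcove arithmetic in the three wall-crossing identities checks out; I verified the Weyl-multiplicity counts for $T(lk_1+2l-2)\otimes T(1)$, $T(lk_1-1)\otimes T(1)$, and $T(lk_1)\otimes T(1)$ and they regroup exactly as you say.

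Two small points worth tightening. First, $T(l-2)\otimes T(1)$ is not literally an instance of the first displayed family (which restricts to $k_0\le l-3$); the same Clebsch--Gordan argument applies because $T(l-1)$, though it sits on a wall, is still a simple Weyl module and hence tilting, but the proposition lists this case separately precisely because the wall weight $l-1$ appears. Second, the step ``this forces the claimed decomposition'' is correct but deserves one explicit sentence: in the split Grothendieck group of tilting modules, the classes $[T(\lambda)]$ form a $\Z$-basis, unitriangular over the $[W_q^{\mathrm{res}}(\mu)]$, so the list of Weyl factors of a tilting module determines its indecomposable summands uniquely. Concretely one peels off $T(\lambda_{\max})$ for the largest occurring weight, subtracts its Weyl factors, and inducts; this is the bookkeeping you gesture at, and making it explicit is what turns the character matching into a module isomorphism.
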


\section{Direct sum decomposition of $T(1)^{\otimes N}$}\label{Section - p(k,N)}
\label{Bounded Catalan paths}
In a manner reminiscent of the case concerning $\SL$, Proposition \ref{recurrence for tilting modules} provides us with a valuable insight. It reveals that for any natural number $N$, there exists a direct sum decomposition of $T(1)^{\otimes N}$ into indecomposable tilting modules of type $\mathbf{1}$. Solovyev \cite{solovyev2022congruence} has notably devised a Lattice path model, incorporating filter restrictions, for effectively enumerating the multiplicities of modules within the decomposition of $T(1)^{\otimes N}$. In this section, we endeavor to shed light on an alternative combinatorial viewpoint regarding the multiplicity of $T(k)$ in $T(1)^{\otimes N}$ by harnessing the  concept of Catalan paths as defined in \ref{defn: Catalan paths}.

Let us set $T(1)^{\otimes 0} = T(0)$. Define a sequence $\left(p(k,N): k,N  \in \N_0\right)$ such that 
$$ T(1)^{\otimes N} \cong \bigoplus_{k \in \N} T(k)^{\oplus p(k,N)}, \text{ for every }N \in \N_0.$$

For any $N \in \N$, we fix $p(-1, N) := 0$.

\begin{thm}\cite{anna2024multiplicities}\label{explicit form for p}
    For any $k_1 \in \N_0$, $k_0 \in \{0,1,\cdots, l-1\}$ and $N \in \N_0$ we have 
       $$
        p(k_1l + k_0, N)  = \begin{cases}
            t(k_1 l + k_0,N) &\text{if }k_0 = l-1\\
            \sum\limits_{n \geq 0} t((k_1 + 2n)l + k_0, N) -\sum\limits_{m \geq 0} t((k_1 + 2m+ 2)l -  k_0 - 2, N) &\text{otherwise }
        \end{cases} $$
\end{thm}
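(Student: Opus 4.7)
The plan is to pass to the Grothendieck group of finite-dimensional type-$\mathbf{1}$ $U_q^{\mathrm{res}}(\SL)$-modules admitting a Weyl filtration, in which the classes $\{[W_q^{\mathrm{res}}(j)] : j \in \N_0\}$ form a $\Z$-basis. The first ingredient is that $T(1) = W_q^{\mathrm{res}}(1)$ has the same formal character as the classical $V(1)$, so products of Weyl-module classes obey the Clebsch--Gordan rule of Proposition \ref{clebsch-gordan}; consequently
\[
[T(1)^{\otimes N}] \;=\; [W_q^{\mathrm{res}}(1)]^N \;=\; \sum_{K \geq 0} t(K, N)\,[W_q^{\mathrm{res}}(K)]
\]
in this Grothendieck group, which reads off the Weyl multiplicities on the left-hand side.

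The second, and main, ingredient is the Weyl content of each indecomposable tilting module: for $k = k_1 l + k_0$ with $k_0 \in \{0,\ldots,l-2\}$ I would show
\[
[T(k_1 l + k_0)] \;=\; [W_q^{\mathrm{res}}(k_1 l + k_0)] \;+\; [W_q^{\mathrm{res}}(k_1 l - k_0 - 2)],
\]
under the convention $[W_q^{\mathrm{res}}(j)] = 0$ for $j < 0$, whereas $[T(k_1 l + (l-1))] = [W_q^{\mathrm{res}}(k_1 l + (l-1))]$. I would prove this by induction on $k$ using Proposition \ref{recurrence for tilting modules}: at each step one checks that the claimed classes on both sides of the tensor product identities agree, with case splits on $k_0 \in \{0,\ldots,l-3\}$, $k_0 = l-2$, and $k_0 = l-1$. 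Alternatively, this Weyl-filtration description is a standard fact in the representation theory of $U_q^{\mathrm{res}}(\SL)$ at an odd $l$-th root of unity.

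Combining the two ingredients, expand $[T(1)^{\otimes N}] = \sum_k p(k,N)[T(k)]$ in the Weyl basis and read off the coefficient of $[W_q^{\mathrm{res}}(K_1 l + K_0)]$. For $K_0 = l-1$ only $[T(K_1 l + K_0)]$ contributes, giving $p(K_1 l + K_0, N) = t(K_1 l + K_0, N)$, which is the first branch of the statement. For $K_0 \in \{0,\ldots,l-2\}$, the class $[W_q^{\mathrm{res}}(K_1 l + K_0)]$ is produced by $[T(K_1 l + K_0)]$ and by the unique $[T(k)]$ whose reflection partner $k_1 l - k_0 - 2$ equals $K_1 l + K_0$ in the admissible range, namely $k = (K_1 + 1) l + (l - K_0 - 2)$. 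This yields the recursion
\[
p(K_1 l + K_0, N) \;=\; t(K_1 l + K_0, N) \;-\; p\bigl((K_1 + 1) l + (l - K_0 - 2),\, N\bigr).
\]

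Finally, iterate. Two applications of the recursion send $(K_1, K_0) \mapsto (K_1 + 2, K_0)$, so after $2M$ iterations one obtains an alternating partial sum of $t$-values together with a residual $p\bigl((K_1 + 2M) l + K_0,\, N\bigr)$. This residual vanishes for $M$ large enough, since $T(1)^{\otimes N}$ has highest weight $N$ and so $p(k, N) = 0$ whenever $k > N$. Reindexing the negative terms via $(K_1 + 2n + 1) l + (l - K_0 - 2) = (K_1 + 2n + 2) l - K_0 - 2$ recovers the two sums displayed in the theorem. The main obstacle is the Weyl-filtration identity for $[T(k)]$; once it is in hand, the remainder is a basis-extraction in the Grothendieck group together with a finite telescope.
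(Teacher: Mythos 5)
The paper only cites this result to \cite{anna2024multiplicities} and does not reprove it, so there is no in-paper argument to compare against; your proposal stands on its own. The route you take is sound and is the standard one: work in the split Grothendieck group of type-$\mathbf{1}$ $U_q^{\mathrm{res}}(\SL)$-modules with Weyl filtration (where the $[W_q^{\mathrm{res}}(j)]$ are a $\Z$-basis and tensor products of tiltings remain tilting), read off Weyl multiplicities of $T(1)^{\otimes N}$ via Clebsch--Gordan (Weyl characters at an odd root of unity coincide with classical ones, so $[W_q^{\mathrm{res}}(1)]^N=\sum_K t(K,N)[W_q^{\mathrm{res}}(K)]$), and substitute the two-step Weyl content of each $T(k)$. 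I checked that the content you assert,
\[
[T(k_1 l + k_0)] = [W_q^{\mathrm{res}}(k_1 l + k_0)] + [W_q^{\mathrm{res}}(k_1 l - k_0 - 2)] \quad (k_0\in\{0,\dots,l-2\}),\qquad [T(k_1 l + l - 1)] = [W_q^{\mathrm{res}}(k_1 l + l - 1)],
\]
with the convention $[W_q^{\mathrm{res}}(j)]=0$ for $j<0$, is consistent with every line of Proposition~\ref{recurrence for tilting modules}, so the induction you sketch does close. The coefficient extraction then gives $p(K_1 l + K_0, N) = t(K_1 l + K_0, N) - p\bigl((K_1+1)l + (l-K_0-2), N\bigr)$ for $K_0\neq l-1$, the two-fold iterate maps $(K_1,K_0)\mapsto(K_1+2,K_0)$, and the residual vanishes since $p(k,N)=0$ for $k>N$; the reindexing $(K_1+2n+1)l+(l-K_0-2)=(K_1+2n+2)l-K_0-2$ reproduces the two sums in the statement, and for $K_0=l-1$ only $T(K_1 l + l - 1)$ contributes, giving the first branch.

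The one soft spot is that the Weyl-filtration structure of $T(k)$ is stated with only a sketched induction rather than a full argument, and that induction must also handle a base case (weights in $\{0,\dots,l-1\}$, where $T=W$). Since this is a well-known structural fact for quantum $\SL$ at an odd root of unity and you explicitly flag it as the main obstacle, I regard the proposal as correct and complete in outline; filling in that induction is routine given Proposition~\ref{recurrence for tilting modules}.
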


\begin{cor}
\label{cor: recurrence for p(k,N)}
    For any $k_1 \in \N_0$ and $ N \in \N$  we have 
    \begin{itemize}
        \item For any $k_0 \in \{0,1,\cdots, l-3\}$, $ p(k_1l + k_0, N) = p(k_1l + k_0-1,N-1) + p(k_1l+k_0 + 1, N-1) $;
        \item For $ k_0 = l-2 $, $p(k_1 l + k_0, N) = p(k_1l + l-3,N-1)$.
    \end{itemize}
\end{cor}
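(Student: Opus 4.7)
My plan is to prove each bullet by substituting the closed form of Theorem \ref{explicit form for p} into the left-hand side, applying the recurrence $t(k, N) = t(k-1, N-1) + t(k+1, N-1)$ from Proposition \ref{prop: recursion of t} to every resulting $t(\cdot, N)$, and then regrouping the double sums so that each group matches the closed form of a $p(\cdot, N - 1)$ value appearing on the right.

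For the first bullet, after expanding
$$p(k_1 l + k_0, N) = \sum_{n \geq 0} t((k_1 + 2n)l + k_0, N) - \sum_{m \geq 0} t((k_1 + 2m + 2)l - k_0 - 2, N)$$
and splitting every $t(\cdot, N)$ into its ``plus one'' and ``minus one'' contributions at time $N - 1$, the ``plus one'' piece reads
$$\sum_{n \geq 0} t((k_1 + 2n)l + (k_0 + 1), N - 1) - \sum_{m \geq 0} t((k_1 + 2m + 2)l - (k_0 + 1) - 2, N - 1),$$
which is exactly the formula for $p(k_1 l + k_0 + 1, N - 1)$ (valid since $k_0 + 1 \in \{1, \dots, l - 2\}$ lies inside the ``otherwise'' range of Theorem \ref{explicit form for p}). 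For $k_0 \geq 1$ the same argument identifies the ``minus one'' piece with $p(k_1 l + k_0 - 1, N - 1)$. In the boundary case $k_0 = 0$, the ``minus one'' piece equals
$$\sum_{n \geq 0} t((k_1 + 2n)l - 1, N - 1) - \sum_{m \geq 0} t((k_1 + 2m + 2)l - 1, N - 1),$$
and the index shift $n = m + 1$ makes all but one term cancel, leaving $t(k_1 l - 1, N - 1)$. This value equals $p(k_1 l - 1, N - 1)$: either via the first branch of Theorem \ref{explicit form for p} applied with $(k_1', k_0') = (k_1 - 1, l - 1)$ when $k_1 \geq 1$, or via the conventions $t(-1, \cdot) = p(-1, \cdot) = 0$ when $k_1 = 0$.

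For the second bullet, the analogous substitution at $k_0 = l - 2$ yields
$$\sum_{n \geq 0} [t((k_1 + 2n)l + l - 3, N - 1) + t((k_1 + 2n)l + l - 1, N - 1)] - \sum_{m \geq 0} [t((k_1 + 2m + 1)l - 1, N - 1) + t((k_1 + 2m + 1)l + 1, N - 1)].$$
The crucial observation is the identity $(k_1 + 2n)l + (l - 1) = (k_1 + 2n + 1)l - 1$, so with $m = n$ the second term of the first bracketed sum cancels the first term of the second bracketed sum. What remains is precisely the closed form of $p(k_1 l + l - 3, N - 1)$ from Theorem \ref{explicit form for p}.

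I expect the main obstacle to be the careful bookkeeping in the $k_0 = 0$ boundary case of the first bullet, where the correct index shift between the two sums must be performed so that the single surviving term can be correctly identified, through the special ``$k_0 = l - 1$'' branch of the explicit formula, with $p(k_1 l - 1, N - 1)$.
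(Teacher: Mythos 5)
Your proposal is correct and takes essentially the same route as the paper: substitute the closed form from Theorem \ref{explicit form for p}, apply the $t$-recurrence from Proposition \ref{prop: recursion of t}, and regroup the resulting sums to recognize $p(\cdot, N-1)$ terms, with exactly the same cancellations in the boundary cases. You are slightly more explicit than the paper in justifying the identification $t(k_1 l - 1, N-1) = p(k_1 l - 1, N-1)$ via the $k_0 = l-1$ branch of Theorem \ref{explicit form for p} (the paper states this step without comment), but the underlying argument is identical.
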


\begin{proof}
    We will take three cases:
    \begin{itemize}
        \item If $k_0 = 0$, then \begin{align*}
            p(k_1 l + k_0, N) &= \sum\limits_{n \geq 0} t((k_1 + 2n)l , N) -\sum\limits_{m \geq 0} t((k_1 + 2m+ 2)l  - 2, N)\\
            &= \sum\limits_{n \geq 0} t((k_1 + 2n)l-1 , N-1) + t((k_1 + 2n)l+1 , N-1) \tag{by Proposition \ref{prop: recursion of t}}\\
            & \hspace*{14mm} -\sum\limits_{m \geq 0} t((k_1 + 2m+ 2)l  - 1, N-1) + t((k_1 + 2m+ 2)l  - 3, N-1)\\
            &= \sum\limits_{n \geq 0} t((k_1 + 2n)l-1 , N-1) - \sum\limits_{m \geq 1} t((k_1 + 2m)l  - 1, N-1)\\
            &\hspace*{16mm} +\sum\limits_{n \geq 0} t((k_1 + 2n)l+1 , N-1) - \sum\limits_{m \geq 1} t((k_1 + 2m)l  - 3, N-1)\\
            &= t(k_1l - 1, N-1) + p(k_1l+1, N-1)\tag{by Theorem \ref{explicit form for p}}\\
            &= p(k_1 l +k_0 -1,N-1) + p(k_1 l + k_0 +1, N-1).
        \end{align*}

        \item If $k_0 \in \{1,2,\cdots, l-3\},$ then \begin{align*}
            p(k_1 l + k_0, N) &= \sum\limits_{n \geq 0} t((k_1 + 2n)l + k_0 , N) -\sum\limits_{m \geq 0} t((k_1 + 2m+ 2)l -k_0 - 2, N)\\
            &= \sum\limits_{n \geq 0} t((k_1 + 2n)l+k_0-1 , N-1) + t((k_1 + 2n)l+k_0+1 , N-1) \tag{by Proposition \ref{prop: recursion of t}}\\
            & \hspace*{12mm} -\sum\limits_{m \geq 0} t((k_1 + 2m+ 2)l - k_0  - 1, N-1) + t((k_1 + 2m+ 2)l - k_0  - 3, N-1)\\
            &= \sum\limits_{n \geq 0} t((k_1 + 2n)l + k_0 -1 , N-1) - \sum\limits_{m \geq 1} t((k_1 + 2m)l -k_0 - 1, N-1)\\
            &\hspace*{14mm} +\sum\limits_{n \geq 0} t((k_1 + 2n)l+k_0+1 , N-1) - \sum\limits_{m \geq 1} t((k_1 + 2m)l - k_0  - 3, N-1)\\
            &= p(k_1 l +k_0 - 1, N-1) + p(k_1 l + k_0 + 1, N-1)\tag{by Theorem \ref{explicit form for p}}
        \end{align*}
        \item  If $k_0 = l-2,$ then \begin{align*}
             p(k_1 l + k_0, N) &= \sum\limits_{n \geq 0} t((k_1 + 2n)l + l-2 , N) -\sum\limits_{m \geq 0} t((k_1 + 2m+ 2)l -l, N)\\
            &= \sum\limits_{n \geq 0} t((k_1 + 2n+1)l-3 , N-1) + t((k_1 + 2n+1)l-1 , N-1) \tag{by Proposition \ref{prop: recursion of t}}\\
            & \hspace*{12mm} -\sum\limits_{m \geq 0} t((k_1 + 2m+ 1)l  - 1, N-1) + t((k_1 + 2m+ 1)l +1, N-1)\\
            &= \sum\limits_{n \geq 0}t((k_1 + 2n+1)l-1 , N-1) - \sum\limits_{m \geq 0} t((k_1 + 2m+ 1)l  - 1, N-1)\\
            &\hspace*{14mm} +\sum\limits_{n \geq 0} t((k_1 + 2n+1)l-3 , N-1) - \sum\limits_{m \geq 1} t((k_1 + 2m+1)l   +1, N-1)\\
            &= p(k_1 l + l-3, N-1)\tag{by Theorem \ref{explicit form for p}}
        \end{align*}
    \end{itemize}
\end{proof}

Similar to Section \ref{Section: catalan paths and t(k,N)}, we will now establish a one-one correspondence between $p(k,N)$ and some  class of Catalan paths ending at $(N,k)$. First, we will define a special type of Catalan path.

\begin{defn}
    For any $k_1,N \in \N_0$ and $k_0 \in \{-1, 0,\cdots , l-2\} $, a Catalan path $f$ ending at $(N,k_1 l + k_0) $ is said to be Partially $l-$bounded if for every $i \in f^{-1}(k_1l+l - 1)$ we have, $$\{i,i+1,\cdots, N\}\cap f^{-1}(k_1 l -1) \neq \emptyset.$$
\end{defn}

The next figure illustrates the definition of Partially $l-$bounded Catalan Paths.

   \begin{figure}[H]
\centering
\begin{subfloat} {
\label{ex:1}
\centering
    \begin{tikzpicture}
   
            \draw[very thin,color=gray!15,step=0.8] (0,0) grid (16.8,9.6);
        \draw[-latex] (0,0) -- (17,0) node[right] { };
        \draw[-latex] (0,0) -- (0,9.8) node[above] {};
        \coordinate [label=below left:O] (a) at (0,0);
        \foreach \i in {1,2,...,21}
        \draw[gray!65] (\i*0.8,.1)--(\i*0.8,-.1) node[below] {$\i$};
        \foreach \i in {1,2,...,12}
        \draw[gray!65] (.1,\i*0.8)--(-.1,\i*0.8) node[left] {$\i$};
        \draw[dashed] (0,3.2)--(16.8,3.2){};
        \draw[dashed] (0,7.2)--(16.8,7.2){};
        \draw[color=red!60,thin] (0,0)--(8,8){};
        \draw[color=red!60,thin] (8,8)--(13.6, 2.4){};
        \draw[color=red!60,thin] (13.6,2.4)--(16, 4.8){};
        \node at (8,8.3) {$(1)$};
        \draw[color=blue!60, thin] (0,0)--(0.8,0.8){};
        \draw[color=blue!60,thin] (0.8,0.8)--(1.6,0){};
        \draw[color=blue!60,thin] (1.6, 0)--(10.4, 8.8){};
        \node at (10.4,9.1) {$(2)$};
        \draw[color=blue!60,thin] (10.4,8.8)--(16,3.2){};
        \draw[color=green!60, thin] (0,0)--(1.6,1.6){};
        \draw[color=green!60, thin] (1.6,1.6)--(3.2,0){};
        \draw[color=green!60, thin] (3.2,0)--(5.6,2.4){};
        \node at (5.6,2.7) {$(3)$};
         \draw[color=green!60, thin] (5.6,2.4)--(8,0){};
         \draw[color=green!60, thin] (8,0)--(8.8,0.8){};
         \draw[color=green!60, thin] (8.8,0.8)--(9.6,0){};
         \draw[color=green!60, thin] (9.6,0)--(10.4,0.8){};
         \draw[color=green!60, thin] (10.4,0.8)--(11.2,0){};
         \draw[color=green!60, thin] (11.2,0)--(12,0.8){};
         \draw[color=green!60, thin] (12,0.8)--(12.8,0){};
         \draw[color=green!60, thin] (12.8,0)--(13.6,0.8){};\draw[color=green!60, thin] (13.6,0.8)--(14.4,0){};
         \draw[color=green!60, thin] (14.4,0)--(15.2,0.8){};
         \draw[color=green!60, thin] (15.2,0.8)--(16,0){};
         \draw[color=violet!60, thin] (0,0)--(4.8,4.8){};
         \draw[color=violet!60, thin] (4.8,4.8)--(6.4,3.2){};
         \draw[color=violet!60, thin] (6.4,3.2)--(12,8.8){};
         \draw[color=violet!60, thin] (12,8.8)--(16,4.8){};
         \node at (12,9.1) {$(4)$};
        \draw[color=purple!60, thin] (0,0)--(2.4,2.4){};
        \draw[color=purple!60, thin] (2.4,2.4)--(4.8,0){};
        \draw[color=purple!60, thin] (4.8,0)--(8,3.2){};
        \draw[color=purple!60, thin] (8,3.2)--(10.4,0.8){};
        \draw[color=purple!60, thin] (10.4,0.8)--(12,2.4){};
        \draw[color=purple!60, thin] (12,2.4)--(13.6,0.8){};
        \draw[color=purple!60, thin] (13.6,0.8)--(15.2,2.4){};
        \draw[color=purple!60, thin] (15.2,2.4)--(16,1.6){};
        \node at (8,3.5) {$(5)$};
    \end{tikzpicture}
}
\end{subfloat}
\caption*{\mbox{$(1),(2)$ and $(3)$ are Partially 5-bounded Catalan paths while $(4)$ and $(5)$ are not Partially 5-bounded.}}
\label{Example:1}

\end{figure}

For $N \in \N_0, k_1 \in \N_0$ and $k_0 \in \{0,1,\cdots, l-1\},$ we define $A^N_{k_1, k_0}$ as the set of all Partially $l-$bounded catalan paths ending at $(N,k_1l+k_0-1)$. Here, we fix $A^N_{0,0}:= \emptyset$ for every $N \in \N_0.$ Since $l>1$, we can note one important observation.

\begin{rem}\label{rem:l-1barrier}
    For any $k_1 \in  \N_0, N\in \N$ and $k_0 \in \{0,1,\cdots , l-1\}$, if $f \in A^N_{k_1, k_0}$ then $f(N-1) \neq k_1l + l-1.$
\end{rem}

If $k_1 = 0$, then note that $f^{-1}(k_1l - 1) = \emptyset$ for any catalan path $f$. Thus if $f \in A^N_{0, k_0}$ then $f^{-1}(l-1) = \emptyset.$ Hence we can note an important observation.

\begin{rem}\label{rem: bounded catalan paths}
    For any $N \in \N$ and $k_0 \in \{ 0 , 1, \cdots , l-1\}$, $$A^N_{0, k_0} = \{ f : f\text{ is a Catalan path ending at }(N, k_0 - 1) \text{ and } f^{-1}(k_1l + l - 1) = \emptyset\}.$$
\end{rem}

The problem of enumeration of paths in Remark \ref{rem: bounded catalan paths} have been studied in \cite{bousquet2007discrete} and \cite{bacher2013generalized}.

\begin{prop}
\label{prop: recurrence of special catalan}
    For any $N \in \N,k_1 \in \N_0$ and $k_0 \in \{0,1,\cdots, l-1\}$, we have 
    $$\left|A^N_{k_1, k_0}\right| = \begin{cases}
        \left|A^{N-1}_{k_1,k_0-1}\right|+\left|A^{N-1}_{k_1,k_0+1}\right| &\text{if }0<k_0<l-1

\vspace{1.5mm}\\
        
        \left|A^{N-1}_{k_1,k_0-1}\right|&\text{if }k_0 = l-1

\vspace{1.5mm}\\
        
        t(k_1 l + k_0 -1, N)&\text{if }k_0 = 0
    \end{cases}.$$
\end{prop}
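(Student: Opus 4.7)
The plan is to establish each clause by an explicit bijection built from the restriction map $f \mapsto f|_{\{0,\ldots,N-1\}}$ and its inverse (appending one step). The governing observation is that the Partially $l$-bounded condition is a statement only about the preimages $f^{-1}(k_1l + l - 1)$ and $f^{-1}(k_1l - 1)$; so it behaves cleanly under restriction/extension whenever the endpoint $f(N) = k_1l + k_0 - 1$ avoids both of the "barrier" heights $k_1l - 1$ and $k_1l + l - 1$.

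First I handle $k_0 = 0$, which is essentially trivial. Here $f(N) = k_1l - 1$, so $N \in f^{-1}(k_1l - 1)$ always serves as the required witness $j$ for every $i \in f^{-1}(k_1l + l - 1)$. Hence every Catalan path ending at $(N, k_1l - 1)$ is Partially $l$-bounded, and Proposition \ref{prop: t(k,N) and Catalan paths} gives the count $\Bar{t}(N, k_1l - 1) = t(k_1l - 1, N)$. The boundary convention $A^N_{0,0} = \emptyset$ is consistent because $t(-1, N) = 0$.

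For $0 < k_0 < l - 1$, the endpoint satisfies $f(N) \in \{k_1l,\, k_1l+1,\, \ldots,\, k_1l + l - 3\}$, so neither barrier height is attained at $N$. Consequently $f^{-1}(k_1l + l - 1)$ and $f^{-1}(k_1l - 1)$ are contained in $\{0,\ldots,N-1\}$, and the Partially $l$-bounded condition on $f$ is verbatim the same condition on $g := f|_{\{0,\ldots,N-1\}}$. Sorting by whether $f(N-1) = f(N) - 1$ or $f(N) + 1$, the restriction map is a bijection from $A^N_{k_1,k_0}$ onto $A^{N-1}_{k_1,k_0-1} \sqcup A^{N-1}_{k_1,k_0+1}$, with inverse given by appending the appropriate step. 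The edge case $k_1 = 0$, $k_0 = 1$ is absorbed by $A^{N-1}_{0,0} = \emptyset$, since $f(N) = 0$ then forces $f(N-1) = 1$.

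The case $k_0 = l - 1$ is analogous, with one additional ingredient: now $f(N) = k_1l + l - 2$, so a priori $f(N-1) \in \{k_1l + l - 3,\, k_1l + l - 1\}$, but Remark \ref{rem:l-1barrier} rules out the second possibility. Hence $g \in A^{N-1}_{k_1, l-2}$; since $f(N) \neq k_1l - 1$ (as $l \neq 1$), the condition transfers both ways and restriction is a bijection $A^N_{k_1, l-1} \to A^{N-1}_{k_1, l-2}$. The only substantive verification in the whole argument is preservation of the Partially $l$-bounded property under extension, which reduces to the remark that every obstruction index $i \in f^{-1}(k_1l + l - 1)$ already lies in $g$'s domain and the witness $j$ supplied by $g$'s own $l$-boundedness still serves for $f$; I expect this bookkeeping, together with invoking Remark \ref{rem:l-1barrier} at the right moment, to be the only place care is needed.
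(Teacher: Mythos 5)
Your proof is correct and takes essentially the same route as the paper: both treat the cases by showing the restriction map $f \mapsto f|_{\{0,\ldots,N-1\}}$ is a bijection onto the claimed set, exploiting that $f(N)=k_1l+k_0-1$ misses both barrier heights $k_1l-1$ and $k_1l+l-1$ when $0<k_0\leq l-1$ (with Remark~\ref{rem:l-1barrier} ruling out the upward final step when $k_0=l-1$), while for $k_0=0$ the index $N$ itself is always a witness so every Catalan path to $(N,k_1l-1)$ is Partially $l$-bounded and Proposition~\ref{prop: t(k,N) and Catalan paths} gives the count.
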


\begin{proof}
   Fix any catalan path $f$ ending at $(N, k_1 l + k_0 - 1)$. Thus $f(N-1) \in \{k_1l+k_0-2, k_1l + k_0\}$  We have three cases. \begin{itemize}
        \item If $k_0 \in \{ 1,2\cdots , l-2 \}$, then $0\leq k_0-1<k_0 + 1\leq l-1$. Since $N \notin f^{-1}(k_1 l-1) $ we get that $\{i,i+1, \cdots, N-1\} \cap f^{-1}(k_1 l -1) = \{i,i+1, \cdots, N\} \cap f^{-1}(k_1 l - 1) $ for any $i \in f^{-1} (k_1 l + l-1)$. Hence $ f\mid_{ \{0,1,\cdots , N-1\} } \in A^N_{k_1, k_0 - 1} \cup A^N_{k_1 , k_0 + 1}$ if and only if $f \in A^N_{k_1 , k_0}$. Therefore, $$ \left\lvert A^N_{k_1 , k_0} \right\rvert = \ = \left\lvert A^{N-1}_{k_1,k_0 - 1} \right\rvert + \left\lvert A^{N-1}_{k_1, k_0 + 1} \right\rvert .$$

        \item If $k_0 = l-1$, then Remark \ref{rem:l-1barrier} yields that if $f \in A^N_{k_1,k_0}$ then $f(N-1) \neq k_1l + k_0-1 = k_1l+k_0$. Thus is $f \in A^N_{k_1,k_0}$ then $f(N-1) = k_1 l + k_0 -2$. Since $N \notin f^{-1}(k_1 l-1) $ we get that $\{i,i+1, \cdots, N-1\} \cap f^{-1}(k_1 l -1) = \{i,i+1, \cdots, N\} \cap f^{-1}(k_1 l - 1) $ for any $i \in f^{-1} (k_1 l + l-1)$. Hence $ f\mid_{ \{0,1,\cdots , N-1\} } \in A^N_{k_1, k_0 - 1}$ if and only if $f \in A^N_{k_1 , k_0}$. Therefore, $$ \left\lvert A^N_{k_1 , k_0} \right\rvert =  \left\lvert A^{N-1}_{k_1,k_0 - 1} \right\rvert .$$

        \item If $k_0 = 0,$ then since $f(N) = k_1 l + k_0 -1 k_1l-1$ we get that $N \in \{ i, i+1 , \cdots , N \}\cap f^{-1} \neq \emptyset$ for every $i \in \{0,1,\cdots, N\}$. Hence $f \in A^N_{k_1, k_0}$ for any catalan path $f$ ending at $(N, k_1l + k_0 - 1)$. Therefore, $$ \left\lvert A^N_{k_1 , k_0} \right\rvert = t(k_1l + k_0 -1,N). $$
    \end{itemize}
\end{proof}

Using the recurrence relations obtained in Proposition \ref{prop: recurrence of special catalan} and Corollary \ref{cor: recurrence for p(k,N)}, we can establish a relation between $p(k,N)$ and $\left|A^N_{k_1, k_0}\right|$.

\begin{cor}\label{cor: p(k,N) for catalan paths}
    For any $k_1 , N\in \N_0$ and $k_0 \in \{0,1,\cdots, l-1\}$, $p(k_1 l + k_0 - 1, N) = \left\lvert A^N_{k_1 , k_0}\right\rvert$.
\end{cor}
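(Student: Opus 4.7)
The plan is to prove the equality by induction on $N \in \N_0$, matching the recurrence for $|A^N_{k_1, k_0}|$ given in Proposition \ref{prop: recurrence of special catalan} against the recurrence for $p(k_1 l + k_0 - 1, N)$ obtained from Corollary \ref{cor: recurrence for p(k,N)} after reindexing $k_0 \mapsto k_0 - 1$. Under this substitution the three ``shapes'' of recurrence (two-term, one-term, direct-$t$) line up exactly: the three-way split in Proposition \ref{prop: recurrence of special catalan} over $k_0 \in \{1,\ldots,l-2\}$, $k_0 = l-1$, and $k_0 = 0$ corresponds respectively to the two cases of Corollary \ref{cor: recurrence for p(k,N)} together with a direct application of Theorem \ref{explicit form for p} at the block boundary.

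For the base case $N = 0$, both sides equal $\delta_{(k_1,k_0),(0,1)}$: on the algebra side, $T(1)^{\otimes 0} = T(0)$ forces $p(k,0) = \delta_{k,0}$; on the combinatorial side, the unique length-zero Catalan path lies in $A^0_{k_1,k_0}$ precisely when $k_1 l + k_0 - 1 = 0$, which forces $(k_1,k_0) = (0,1)$.

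For the induction step, I would fix $N \geq 1$ and assume the result at $N-1$. If $1 \leq k_0 \leq l-2$, then Corollary \ref{cor: recurrence for p(k,N)} applied to the index $k_0 - 1 \in \{0,\ldots,l-3\}$, followed by the induction hypothesis, followed by Proposition \ref{prop: recurrence of special catalan}, yields in succession $p(k_1 l + k_0 - 1, N) = p(k_1 l + k_0 - 2, N-1) + p(k_1 l + k_0, N-1) = |A^{N-1}_{k_1, k_0-1}| + |A^{N-1}_{k_1, k_0+1}| = |A^N_{k_1, k_0}|$. If $k_0 = l-1$, the one-term case of Corollary \ref{cor: recurrence for p(k,N)} at $k_0 - 1 = l - 2$ together with the one-term case of Proposition \ref{prop: recurrence of special catalan} gives the analogous conclusion.

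The remaining case $k_0 = 0$ is the main subtlety, and the only place where the induction is actually sidestepped: $p(k_1 l - 1, N)$ is not directly accessible through Corollary \ref{cor: recurrence for p(k,N)} because $k_1 l - 1 = (k_1 - 1) l + (l - 1)$ sits on a block boundary of the indexing. For $k_1 \geq 1$, I would invoke Theorem \ref{explicit form for p} (with its $k_0 = l-1$ clause applied at $k_1 - 1$) to get $p(k_1 l - 1, N) = t(k_1 l - 1, N)$, which matches $|A^N_{k_1, 0}| = t(k_1 l - 1, N)$ from the third clause of Proposition \ref{prop: recurrence of special catalan}. For $k_1 = 0$ both sides vanish, since the convention $p(-1, N) = 0$ is matched by the convention $A^N_{0,0} = \emptyset$. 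The hard part of the argument is essentially clerical: keeping the block-boundary bookkeeping straight and verifying that these conventions have been set up precisely so that the induction closes.
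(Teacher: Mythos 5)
Your proposal is correct and follows essentially the same route as the paper: induction on $N$, with the recurrence for $p$ from Corollary \ref{cor: recurrence for p(k,N)} matched case-by-case in $k_0$ against the recurrence for $\left|A^N_{k_1,k_0}\right|$ in Proposition \ref{prop: recurrence of special catalan}, and the boundary case $k_0 = 0$ handled directly via Theorem \ref{explicit form for p} rather than through the inductive hypothesis. The paper merely presents the three cases in a single displayed formula instead of sequentially, but the underlying bookkeeping---including the identification $p(k_1 l - 1, N) = t(k_1 l - 1, N)$ via the $k_0 = l-1$ clause of Theorem \ref{explicit form for p} at index $k_1 - 1$, and the matched conventions $p(-1,N) = 0$ and $A^N_{0,0} = \emptyset$---is exactly what you describe.
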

\begin{proof}
    We will prove this by induction on $N \in \N$.

\vspace{5mm}

\textbf{Base Case.} If $N =0$, then note that $t(-1,0) = 0$ and $t(k,0) = 0$ if $k > 0$. Thus, Theorem \ref{explicit form for p} yields that 
\begin{align*}
    p(k_1 l +k_0-1,0) &= \begin{cases}
    t(k_1 l+ l -1, 0) &\text{if }k_0 = 0\\
    t(k_1 l + k_0-1 , 0) - t((k_1+2)l - k_0 -2, 0)&\text{otherwise}
\end{cases}\\
\implies p(k_1 l +k_0-1,0) &= \begin{cases}
    1 &\text{if }k_0 = 1\text{ and }k_0 = 0\\
    0&\text{otherwise}
\end{cases}
\end{align*}
 we have two cases.
\begin{itemize}
    \item If $k_1l+k_0 = 1$, then note that there exists a unique namely $f:\{ 0 \} \in \N_0$ where $f(0) = 0$. Thus $|A^N_{k_1,k_0}| = 1 = p(k_1l+k_0 - 1, N)$. 
    \item If $k_1l + k_0 \neq 1$, then note that $A^N_{k_1,k_0} = \emptyset$. Thus $\left|A^N_{k_1,k_0}\right| =0 = p(k_1l + k_0 - 1, N)$. 
\end{itemize}

\vspace{2mm}

    \textbf{Induction Step.} If $N > 0$, then Theorem \ref{explicit form for p} and Corollary \ref{cor: recurrence for p(k,N)} yields 
    $$p(k_1 l + k_0-1,N) = \begin{cases}
        p(k_1 l + k_0 -2, N-1)+p(k_1 l + k_0 , N-1) &\text{if }0<k_0<l-1\\
        p(k_1 l + k_0 -2, N-1)&\text{if }k_0 = l-1\\
        t(k_1 l + k_0 -1, N)&\text{if }k_0 = 0
    \end{cases}.$$
    By induction hypothesis we have 
    $$p(k_1 l + k_0-1,N) = \begin{cases}
        \left|A^{N-1}_{k_1,k_0-1}\right|+\left|A^{N-1}_{k_1,k_0+1}\right| &\text{if }0<k_0<l-1

        \vspace{1.5mm}\\
        
        \left|A^{N-1}_{k_1,k_0-1}\right|&\text{if }k_0 = l-1        
        \vspace{1.5mm}\\
        t(k_1 l + k_0 -1, N)&\text{if }k_0 = 0
    \end{cases}.  $$
    Thus Corollary \ref{prop: recurrence of special catalan} yields $ p(k_1 l + k_0 - 1, N) = \left| A^N_{k_1,k_0} \right|. $
\end{proof}

Corollary \ref{cor: finite sum for t terms}, also leads to another recursive description for $p(k,N)$ as given in the Proposition below.

\begin{prop}
    If $k_1 , N \in \N, k_0 \in \{0,1,\cdots ,l-1\}$ and $k' \in \{ 1,2,\cdots, k_1 \}$, then $$ p(k_1 l + k_0 , N) = \sum_{i=0}^{N-1} t(k'l-1, N-i-1)p((k_1 - k')l + k_0 , i). $$
\end{prop}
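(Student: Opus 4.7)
The plan is to combine the explicit formula for $p$ from Theorem \ref{explicit form for p} with the convolution identity for $t$ in Corollary \ref{cor: finite sum for t terms}. The key observation is that Corollary \ref{cor: finite sum for t terms} can be specialized by choosing the splitting parameter to be a multiple of $l$, namely $K' := k' l$; this makes the rightmost factor in the convolution land precisely on arguments of the form $(k_1 - k' + 2j)l + k_0$ and $(k_1 - k' + 2j + 2)l - k_0 - 2$, which is exactly what is needed to reassemble a $p$-value via Theorem \ref{explicit form for p}.

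Assume first that $k_0 \in \{0, 1, \ldots, l - 2\}$. Theorem \ref{explicit form for p} expresses
\[
p(k_1 l + k_0, N) = \sum_{n \geq 0} t((k_1 + 2n)l + k_0, N) - \sum_{m \geq 0} t((k_1 + 2m + 2)l - k_0 - 2, N).
\]
I then apply Corollary \ref{cor: finite sum for t terms} with splitting parameter $k' l$ to each $t$-term; for instance,
\[
t((k_1 + 2n)l + k_0, N) = \sum_{i = 0}^{N - 1} t(k'l - 1, i)\, t((k_1 - k' + 2n)l + k_0, N - i - 1),
\]
and analogously for the subtracted terms. Swapping the order of summation pulls $t(k'l - 1, i)$ out in front, and the inner $n$- and $m$-sums become exactly the expression given by Theorem \ref{explicit form for p} for $p((k_1 - k')l + k_0, N - i - 1)$. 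A change of index $i \mapsto N - 1 - i$ then delivers the stated identity.

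The residual case $k_0 = l - 1$ is handled in one step: Theorem \ref{explicit form for p} gives $p(k_1 l + l - 1, N) = t((k_1 + 1) l - 1, N)$, and a single application of Corollary \ref{cor: finite sum for t terms} with splitting parameter $k' l$ yields the result, using $t((k_1 + 1 - k')l - 1, N - i - 1) = p((k_1 - k') l + l - 1, N - i - 1)$ from Theorem \ref{explicit form for p} before reindexing. The one bookkeeping point that must be checked carefully is the admissibility condition $1 \leq k' l \leq k$ for every first argument $k$ to which Corollary \ref{cor: finite sum for t terms} is applied. For the positive terms $t((k_1 + 2n)l + k_0, N)$ this is immediate from $k' \leq k_1 \leq k_1 + 2n$. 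For the subtracted terms $t((k_1 + 2m + 2)l - k_0 - 2, N)$ one needs $k' \leq k_1 + 2m + 2 - (k_0 + 2)/l$; since $k_0 \leq l - 2$ forces $(k_0 + 2)/l \leq 1$, the hypothesis $k' \leq k_1$ already suffices. I do not foresee any further obstacle beyond this verification and the careful reindexing.
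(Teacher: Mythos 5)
Your proposal is correct and takes essentially the same route as the paper: both split by $k_0 = l-1$ versus $k_0 \le l-2$, both expand $p$ via Theorem \ref{explicit form for p}, apply Corollary \ref{cor: finite sum for t terms} with splitting parameter $k'l$ to each $t$-term, swap the order of summation, and reassemble the inner sum into $p((k_1-k')l+k_0, \cdot)$ via Theorem \ref{explicit form for p} again. The only difference is cosmetic (you apply the Corollary before reindexing $i \mapsto N-1-i$ while the paper writes the convolution already reindexed), and you add a useful explicit check of the admissibility condition $1 \le k'l \le k$ for each $t$-term, which the paper leaves tacit.
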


\begin{proof}
    We will take two cases.
        \begin{enumerate}
            \item If $k_0 = l-1,$ then Theorem \ref{explicit form for p} yields $p(k_1 l + k_0, N) = t(k_1 l + k_0 , N)  $. Thus Corollary \ref{cor: finite sum for t terms} yields $p(k_1 l + k_0, N) = \sum_{i=0}^{N-1} t(k'l-1, N-i-1)t((k_1-k')l + k_0 , i)  $. Since $k_0 = l-1$, we get that $$ p(k_1l+k_0, N) = \sum_{i=0}^{N-1} t(k'l-1, N-i-1)p((k_1-k')l + k_0 , i) . $$

            \item If $k_0 \in \{0,1,\cdots , l-2\},$ then
            \begin{align*}
               p(k_1 l + k_0) &= \sum\limits_{n \geq 0} t((k_1 + 2n)l + k_0, N) -\sum\limits_{m \geq 0} t((k_1 + 2m+ 2)l -  k_0 - 2, N)\\
                &= \sum\limits_{n \geq 0} \sum_{i=0}^{N-1} t(k'l-1,N-i-1)t((k_1 + 2n - k')l + k_0, i) - \tag{by Corollary \ref{cor: finite sum for t terms}}\\ &\hspace{30mm}\sum\limits_{m \geq 0} \sum_{i=0}^{N-1} t(k'l-1,N-i-1)t((k_1 + 2m + 2 - k')l - k_0 - 2, i) \\
                &= \sum_{i=0}^{N-1} t(k'l-1, N-i-1)\bigg( \sum_{n\geq 0} t((k_1 - k' + 2n )l + k_0, i) - \\&\hspace*{65mm}\sum_{m \geq 0} t((k_1 - k' + 2m + 2)l - k_0 - 2, i) \bigg)\\
                &= \sum_{i=0}^{N-1} t(k'l-1,N-i-1)p((k_1 - k')l + k_0, i)\tag{by Theorem \ref{explicit form for p}}
            \end{align*}
        \end{enumerate}
\end{proof}

\begin{cor}
    For any $N, k_1 \in \N,$ and $ k_0 \in \{0,1,\cdots , l-1\}$,
    $$ p(k_1 l + k_0, N) = \sum_{i=0}^{N-1} t(k_1 l - 1, i)p(k_0 ,N-i-1 ). $$
\end{cor}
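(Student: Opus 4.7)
The plan is to deduce this corollary directly from the preceding Proposition by making a specific choice of the free parameter $k'$ and then reindexing the summation. Since we are given $k_1 \in \N$, i.e.\ $k_1 \geq 1$, the value $k' = k_1$ is a legitimate element of $\{1,2,\ldots,k_1\}$, so the Proposition applies with this choice.

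First, I would substitute $k' = k_1$ into the identity of the previous Proposition. This collapses the second argument of $p$ on the right-hand side, since $(k_1 - k')l + k_0 = k_0$, giving
$$ p(k_1 l + k_0, N) \;=\; \sum_{i=0}^{N-1} t(k_1 l - 1,\, N - i - 1)\, p(k_0,\, i). $$

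Then I would perform the index substitution $j = N - i - 1$, so that as $i$ ranges over $\{0,1,\ldots,N-1\}$, the new variable $j$ ranges over the same set (in reverse). Under this change of variable, $t(k_1l - 1, N-i-1)$ becomes $t(k_1 l - 1, j)$, and $p(k_0, i)$ becomes $p(k_0, N - j - 1)$. Renaming $j$ back to $i$ yields exactly
$$ p(k_1 l + k_0, N) \;=\; \sum_{i=0}^{N-1} t(k_1 l - 1,\, i)\, p(k_0,\, N - i - 1), $$
which is the claimed formula. There is no real obstacle here; the corollary is essentially a relabeled special case of the Proposition, and the only thing to verify is that $k' = k_1$ is an admissible choice, which is immediate from $k_1 \geq 1$.
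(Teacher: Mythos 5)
Your proof is correct and is precisely the intended derivation: the paper presents this statement as a corollary with no explicit proof, and the natural route---specializing the preceding Proposition to $k' = k_1$ (valid since $k_1 \geq 1$) and then reversing the summation index $i \mapsto N-i-1$---is exactly what you did. Nothing is missing.
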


The significance of the last Corollary lies in its ability to provide a comprehensive description of $p(k,N)$ in terms of $t$-terms and $p(k_0, i)$-terms, where $k_0 \in \{0,1,\cdots, l-1\}$. This result, in conjunction with Corollary \ref{cor: p(k,N) for catalan paths} and Remark \ref{rem: bounded catalan paths}, provides a connection to the enumeration of Catalan paths with restricted height.

The enumeration of such paths has been extensively explored by Bosquet-Melou\cite{bousquet2007discrete} and Bacher\cite{bacher2013generalized}. They have derived a generating function capable of enumerating Catalan paths with a bounded height, ending at a fixed point. Furthermore, they have also provided combinatorial interpretations for these functions by employing a transfer matrix method.

\section{Asymptotic behavior of $p(k,N)$}
\label{Asymptotic behavior}

As the name suggests, in this section we will investigate the asymptotic behaviour of the function as $N \to \infty$. We will focus on the case when $k < l-1$. Corollary \ref{cor: p(k,N) for catalan paths} and Remark \ref{rem: bounded catalan paths} yields that $p(k,N)$ is the number of Catalan paths to $(N,k)$ which do not touch the line $y = l-1$ line. We can use this combinatorial intuition to formulate a new method of finding $p(k,N)-$terms using a \emph{tridiagonal Toeplitz Matrix} \cite{noschese2013tridiagonal}. Following the notations defined in \cite[\S~1]{noschese2013tridiagonal}, define an $(l-1)$-dimensional tridiagonal Toeplitz Matrix $T := (l-1 ; 1,0,1),$ i.e.
$$ T:= \begin{pmatrix}
    0 & 1 & & & & &\\
    1 & 0 & 1 & & & &\\
    & 1 & 0 & 1 & & &\\
    & & \ddots & \ddots & \ddots & &\\
    & & & 1 & 0 & 1 &\\
    & & & & 1 & 0 & 1\\
    & & & & & 1 & 0
\end{pmatrix}. $$ According to the results in Section \ref{Section - p(k,N)}, we can note an important observation.

\begin{prop}\label{prop Relation between p(k,N) and T}
    For any $N \in \N_0$, $T^{N} e_1 = \sum_{k=1}^{l-1} p(k-1, N)e_k$.
\end{prop}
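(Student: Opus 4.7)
The plan is to proceed by induction on $N$, with the inductive step driven directly by the two-regime recurrence from Corollary \ref{cor: recurrence for p(k,N)}. The tridiagonal Toeplitz matrix $T$ of size $l-1$ is designed precisely so that multiplication by $T$ mimics this recurrence after we identify the standard basis vector $e_k$ with the coefficient $p(k-1,N)$.

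For the base case $N=0$, recall the convention $T(1)^{\otimes 0} = T(0)$, which gives $p(0,0) = 1$ and $p(k,0) = 0$ for all $k \geq 1$. Hence $\sum_{k=1}^{l-1} p(k-1,0) e_k = e_1 = T^0 e_1$, as required.

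For the inductive step, I assume $T^N e_1 = \sum_{k=1}^{l-1} p(k-1,N) e_k$ and compute
$$T^{N+1} e_1 = T\left( \sum_{k=1}^{l-1} p(k-1,N) e_k \right)$$
by applying $T e_j = e_{j-1} + e_{j+1}$ in the interior and using the truncations $Te_1 = e_2$ and $Te_{l-1} = e_{l-2}$ at the boundaries. Collecting coefficients, the component of $T^{N+1}e_1$ along $e_k$ becomes $p(1,N)$ for $k=1$, $p(k-2,N) + p(k,N)$ for $2 \leq k \leq l-2$, and $p(l-3,N)$ for $k=l-1$.

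It remains to match these three expressions with $p(k-1, N+1)$. Specializing Corollary \ref{cor: recurrence for p(k,N)} to $k_1 = 0$, the recurrence with $k_0 = 0$ gives $p(0,N+1) = p(-1,N) + p(1,N) = p(1,N)$ using the convention $p(-1,N) = 0$; the recurrence for $k_0 \in \{1,\ldots,l-3\}$ gives $p(k-1,N+1) = p(k-2,N) + p(k,N)$; and the degenerate case $k_0 = l-2$ gives $p(l-2,N+1) = p(l-3,N)$. These are precisely the three cases above, completing the induction.

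There is no genuinely hard step here; the only subtlety is lining up the boundaries. The truncation of $T$ to size $l-1$ exactly encodes the change of regime in Corollary \ref{cor: recurrence for p(k,N)} at $k_0 = l-2$, where the three-term recurrence collapses to a one-term recurrence, and the absence of a row/column beyond index $l-1$ matches the convention $p(-1,N) = 0$ at the other end.
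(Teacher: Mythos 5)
Your proof is correct and takes essentially the same route as the paper's: induction on $N$, with the base case $N=0$ reducing to $e_1$, and the inductive step obtained by applying $T$ to the hypothesis, splitting into interior and boundary rows, and matching coefficient-by-coefficient against Corollary \ref{cor: recurrence for p(k,N)} with $k_1 = 0$. The paper's version is just terser (it leaves the base case as ``trivial''); you spell out the $k_0 = 0$ and $k_0 = l-2$ boundary cases explicitly, which is sound and slightly clearer.
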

\begin{proof}
    We will prove the result by induction on $N.$

    \textbf{Base Case.} \emph{Trivial}

    \textbf{Induction Step.} Induction hypothesis yields that, $T^{N-1} e_1 = \sum_{k=1}^{l-1} p(k-1, N-1)e_k$. Hence,

    \begin{align*}
        T^{N} e_1 = \sum_{k=1}^{l-1} p(k-1, N-1)Te_k &= p(0,N-1)e_2 + \left(\sum_{k=2}^{l-2} p(k-1, N-1)  (e_{k-1} + e_{k+1})\right) + p(l-2,N-1) e_{l-2}\\
        &= p(1,N-1)e_1 + \left(\sum_{k=2}^{l-2} (p(k-2, N-1) + p(k,N-1))  e_k\right) + p(l-3, N-1)e_{l-1}\\
        &= \sum_{k=1}^{l-1} p(k-1, N)e_k\tag{by Corollary \ref{cor: recurrence for p(k,N)}}
    \end{align*}
\end{proof}

\begin{cor}\label{cor: Relation between p(k,N) and T}
    For any $N \in \N$ and $k \in \{0,1,\cdots, l-2\}$, we have $p(k,N) = \langle T^{N}e_1, e_{k+1} \rangle.$
\end{cor}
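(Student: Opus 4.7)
The plan is that this corollary is an immediate consequence of Proposition \ref{prop Relation between p(k,N) and T}, which has already given the essentially complete picture. No fresh induction or combinatorial argument is needed; the task is just to extract a single coefficient from a vector expansion.

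Concretely, I would proceed as follows. The matrix $T$ acts on $\C^{l-1}$ equipped with its standard Euclidean inner product, and $\{e_1,e_2,\dots,e_{l-1}\}$ is the standard orthonormal basis, so $\langle e_i,e_j\rangle=\delta_{ij}$. The preceding proposition has shown that
\[
T^{N} e_1 \;=\; \sum_{j=1}^{l-1} p(j-1,N)\,e_j.
\]
Taking the inner product of both sides with $e_{k+1}$ for $k\in\{0,1,\dots,l-2\}$, and using orthonormality, I pick out exactly one term in the sum, namely the one with $j=k+1$. This gives
\[
\langle T^{N}e_1, e_{k+1}\rangle \;=\; \sum_{j=1}^{l-1} p(j-1,N)\,\langle e_j,e_{k+1}\rangle \;=\; p(k,N),
\]
which is the claim.

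There is really no main obstacle here: the only thing worth checking is that the range $k\in\{0,1,\dots,l-2\}$ matches up with the summation index $j\in\{1,\dots,l-1\}$ appearing in Proposition \ref{prop Relation between p(k,N) and T}, so that $e_{k+1}$ is indeed one of the basis vectors present in the expansion. The real content of the statement was already carried by the previous proposition, whose inductive proof relied on Corollary \ref{cor: recurrence for p(k,N)} to match the action of $T$ on basis vectors with the recurrence satisfied by $p(\cdot,N)$; this corollary simply repackages that identity in the bilinear-form notation that will be convenient for the spectral analysis of $T$ in the subsequent asymptotic argument.
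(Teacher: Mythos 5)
Your proof is correct and matches the paper's (implicit) reasoning: the corollary is just the coordinate extraction from Proposition \ref{prop Relation between p(k,N) and T} via orthonormality of the standard basis, and the paper indeed states it without further argument. Nothing is missing.
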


We will now use these results to formulate a method to find $p(k,N)$ for $k \in \{0,1,\cdots l-2\}$. For that, we will have to note some essential properties of a tridiagonal Toeplitz Matrix.

\begin{lem}\cite[\S~2]{noschese2013tridiagonal}\label{lem: eigenvectors and eigenvalues of T}
    For any $p \in \{1,2,\cdots, l-1\}$, $\lambda_s := 2\cos\left(\frac{s\pi}{l}\right)$ and $v_s := \sum_{q=1}^{l-1} \sin\left( \frac{sq\pi }{l} \right) e_q$ is the eigenvalue and the corresponding eigenvector of $T$.
\end{lem}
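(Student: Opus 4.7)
The plan is to verify directly that $Tv_s=\lambda_s v_s$ by a componentwise computation, exploiting the sum-to-product trigonometric identity
\[
\sin(A-B)+\sin(A+B)=2\cos(B)\sin(A).
\]
Since $T$ is tridiagonal with zeros on the diagonal and ones on the sub- and super-diagonal, the $q$-th coordinate of $Tv_s$ is simply the sum of the $(q-1)$-th and $(q+1)$-th coordinates of $v_s$ for $q\in\{2,\ldots,l-2\}$, while at the boundaries only one neighbour appears: $(Tv_s)_1=(v_s)_2$ and $(Tv_s)_{l-1}=(v_s)_{l-2}$.

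To treat all indices uniformly, I would extend the trigonometric coefficients by setting $w_q:=\sin(sq\pi/l)$ for $q\in\{0,1,\ldots,l\}$, and observe that $w_0=\sin(0)=0$ and $w_l=\sin(s\pi)=0$. With this convention, the formula $(Tv_s)_q=w_{q-1}+w_{q+1}$ becomes valid for every $q\in\{1,\ldots,l-1\}$, because the phantom contributions at the boundary vanish automatically.

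Applying the sum-to-product identity with $A=sq\pi/l$ and $B=s\pi/l$ yields
\[
w_{q-1}+w_{q+1}=\sin\!\left(\tfrac{sq\pi}{l}-\tfrac{s\pi}{l}\right)+\sin\!\left(\tfrac{sq\pi}{l}+\tfrac{s\pi}{l}\right)=2\cos\!\left(\tfrac{s\pi}{l}\right)\sin\!\left(\tfrac{sq\pi}{l}\right)=\lambda_s\,(v_s)_q
\]
for each $q\in\{1,\ldots,l-1\}$. Hence $Tv_s=\lambda_s v_s$, as required.

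The only subtle point is the boundary behaviour at $q=1$ and $q=l-1$, but this is precisely what the vanishing $w_0=w_l=0$ takes care of; there is no genuine obstacle. Finally, to justify that the $v_s$ (for $s=1,\ldots,l-1$) actually span $\C^{l-1}$ and that the $\lambda_s$ are distinct, one may observe that the values $\cos(s\pi/l)$ are pairwise distinct for $s\in\{1,\ldots,l-1\}$ (since $s\pi/l\in(0,\pi)$ is a range on which cosine is injective), so these $l-1$ eigenvectors form a basis of eigenvectors for $T$.
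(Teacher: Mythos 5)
Your proof is correct; it is the standard direct verification of the eigenvalue/eigenvector pair for a tridiagonal Toeplitz matrix with zero diagonal, and the paper itself does not reprove the lemma but cites \cite[\S~2]{noschese2013tridiagonal}. Your device of extending the coefficients to $w_0=0$ and $w_l=0$ cleanly absorbs the boundary rows, and the sum-to-product identity gives the eigenvalue relation at every coordinate, so the argument is complete.

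One minor remark: the completeness claim at the end (that the $v_s$ span $\C^{l-1}$) is not actually asserted in the lemma as stated, so it is a harmless extra. If you do want to keep it, the justification you give is right: the $\lambda_s=2\cos(s\pi/l)$ are pairwise distinct for $s\in\{1,\ldots,l-1\}$ because $\cos$ is strictly decreasing on $(0,\pi)$, hence the $v_s$ are eigenvectors for distinct eigenvalues of a symmetric matrix and therefore orthogonal and linearly independent. Also note that the statement in the paper has a small typo — the quantifier ranges over $p$ but the variable used is $s$ — which you have implicitly and correctly read as $s$.
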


Also note that for any $q \in \{1,2,\cdots, l-1\},$ we get 
\begin{align*}
    \sum_{s=1}^{l-1} \sin\left( \frac{sq\pi }{l} \right)v_s = \sum_{s=1}^{l-1} \sum_{r=1}^{l-1} \sin\left( \frac{sq\pi }{l} \right)\sin\left( \frac{sr\pi }{l} \right)e_r = \sum_{r=1}^{l-1} \sum_{s=1}^{l-1} \sin\left( \frac{sq\pi }{l} \right)\sin\left( \frac{sr\pi }{l} \right)e_k
 \end{align*}
 \begin{align}\label{Equation: trigonometric form of eigenvectors}
     \implies \sum_{s=1}^{l-1} \sin\left( \frac{sq\pi }{l} \right)v_i = \frac{1}{2}\sum_{r=1}^{l-1} \sum_{s=1}^{l-1} \left( \cos\left(\frac{s (q-r)\pi}{l}\right) - \cos\left(\frac{s(q+r)\pi}{l}\right) \right)e_r
 \end{align}
Note that for any $n \in \N$, if $ \sin\left(\frac{n\pi}{2l}\right) \neq 0 $ then 
\begin{align}
    \sum_{s=1}^{l-1} \cos\left(\frac{sn\pi}{l}\right) &= \frac{1}{2\sin\left(\frac{n\pi}{2l}\right)} \sum_{s=1}^{l-1} \sin\left(\left(s + \frac{1}{2}\right)\frac{n\pi}{l}\right) - \sin\left(\left(s - \frac{1}{2}\right)\frac{n\pi}{l}\right)\notag \\
    &= \frac{1}{2\sin\left(\frac{n\pi}{2l}\right)} \left(\sin\left(\left(l - \frac{1}{2}\right)\frac{n\pi}{l}\right) - \sin\left( \frac{n\pi}{2l}\right)\right) \notag\\
    &= \frac{1}{2\sin\left(\frac{n\pi}{2l}\right)} \left( (-1)^{n+1} \sin\left(\frac{n\pi}{2l}\right) - \sin\left( \frac{n\pi}{2l}\right)\right)\notag.\\
    \implies \sum_{s=1}^{l-1} \cos\left(\frac{sn\pi}{l}\right) &= \frac{(-1)^{n+1} - 1}{2} \text{, whenever } \sin\left(\frac{n\pi}{2l}\right) \neq 0.\label{eq: sum of cosines}
\end{align}

Thus for any distinct $q,r \in \{1,2,\cdots, l-1\}$, we have $ \sum_{s=1}^{l-1} \cos\left(\frac{s (q-r)\pi}{l}\right) = \frac{(-1)^{q-r}-1}{2} = \frac{(-1)^{q+r - 1}}{2} = \sum_{s=1}^{l-1} \cos\left(\frac{s (q-r)\pi}{l}\right)$ and moreover $\sum_{s=1}^{l-1} \left(\cos\left(\frac{s (q-r)\pi}{l}\right) - \cos\left(\frac{s (q+r)\pi}{l}\right) \right) = 0$. Hence Equation \ref{Equation: trigonometric form of eigenvectors} and \ref{eq: sum of cosines} yields that,

$$ \sum_{s=1}^{l-1} \sin\left( \frac{sq\pi }{l} \right)v_s  = \frac{1}{2} \sum_{s =1}^{l-1} \left( 1 - \cos\left( \frac{2sq}{l} \right) \right) e_q = \frac{l-1}{2} + \frac{1}{2}$$

\begin{equation}\label{Relation between eigenvectors and standard basis}
    \implies \sum_{s=1}^{l-1} \sin\left( \frac{sq\pi }{l} \right)v_s = \frac{l}{2} e_q, \text{ for any }q\in \{1,2,\cdots, l-1\}
\end{equation}

\begin{prop}\label{prop: p(k,N) in terms of sine and cosines}
    For any $N \in \N$ and $k \in \{0,1,\cdots, l-2\}$, $$ p(k,N) = \frac{2^{N+1}}{l}\sum_{s=1}^{l-1} \cos^N\left( \frac{s\pi}{l}\right)\sin\left(\frac{s\pi}{l} \right)\sin\left(\frac{s(k+1)\pi}{l}\right).  $$
\end{prop}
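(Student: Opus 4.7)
The plan is to diagonalize the tridiagonal Toeplitz matrix $T$ and use the inner-product formula from Corollary \ref{cor: Relation between p(k,N) and T}, which states $p(k,N) = \langle T^N e_1, e_{k+1}\rangle$. The eigendecomposition is already furnished by Lemma \ref{lem: eigenvectors and eigenvalues of T}: the eigenpairs are $\lambda_s = 2\cos(s\pi/l)$ with eigenvectors $v_s = \sum_{q=1}^{l-1}\sin(sq\pi/l) e_q$ for $s \in \{1,\dots,l-1\}$. Since $T$ is symmetric with distinct eigenvalues, the $v_s$ form an orthogonal basis, so every standard basis vector (in particular $e_1$) admits a unique expansion in the $\{v_s\}$.

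First, I would read off the expansion of $e_1$ from Equation \eqref{Relation between eigenvectors and standard basis}, specialized to $q = 1$: it gives $e_1 = \frac{2}{l}\sum_{s=1}^{l-1}\sin(s\pi/l)\, v_s$. Applying $T^N$ and using $T^N v_s = \lambda_s^N v_s = 2^N\cos^N(s\pi/l)\, v_s$ yields
\begin{equation*}
T^N e_1 \;=\; \frac{2^{N+1}}{l}\sum_{s=1}^{l-1}\cos^N\!\left(\frac{s\pi}{l}\right)\sin\!\left(\frac{s\pi}{l}\right) v_s.
\end{equation*}

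Next, I would take the inner product with $e_{k+1}$ for $k \in \{0,1,\dots,l-2\}$. Since $\langle v_s, e_{k+1}\rangle = \sin(s(k+1)\pi/l)$ directly from the definition of $v_s$, this produces
\begin{equation*}
p(k,N) \;=\; \langle T^N e_1, e_{k+1}\rangle \;=\; \frac{2^{N+1}}{l}\sum_{s=1}^{l-1}\cos^N\!\left(\frac{s\pi}{l}\right)\sin\!\left(\frac{s\pi}{l}\right)\sin\!\left(\frac{s(k+1)\pi}{l}\right),
\end{equation*}
which is exactly the claimed formula.

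There is no real obstacle here, since the heavy lifting has already been done in the earlier sections: Proposition \ref{prop Relation between p(k,N) and T} converts the combinatorial/representation-theoretic quantity $p(k,N)$ into an iterated matrix action, Lemma \ref{lem: eigenvectors and eigenvalues of T} supplies the spectrum, and Equation \eqref{Relation between eigenvectors and standard basis} provides the change-of-basis coefficients between $\{e_q\}$ and $\{v_s\}$. The only point requiring a little care is the direction of the change of basis: one must use \eqref{Relation between eigenvectors and standard basis} with $q=1$ to write $e_1$ as a linear combination of $v_s$, rather than the reverse. Once that is set up, the calculation is a three-line substitution.
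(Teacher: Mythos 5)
Your proof is correct and uses the same core strategy as the paper (diagonalizing the tridiagonal Toeplitz matrix via Lemma \ref{lem: eigenvectors and eigenvalues of T} and applying Corollary \ref{cor: Relation between p(k,N) and T}), but with a small, genuine streamlining. The paper expands \emph{both} $e_1$ and $e_{k+1}$ in the eigenbasis $\{v_s\}$ using Equation \eqref{Relation between eigenvectors and standard basis}, then invokes orthogonality of eigenvectors belonging to distinct eigenvalues and separately computes $\langle v_s, v_s\rangle = l/2$ to evaluate the double sum. You expand only $e_1$ and then pair directly with $e_{k+1}$, reading off $\langle v_s, e_{k+1}\rangle = \sin(s(k+1)\pi/l)$ from the definition of $v_s$; this bypasses both the orthogonality argument and the norm computation. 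Both routes are valid, but yours needs strictly fewer ingredients: the paper's version must justify $\langle v_s, v_q\rangle = 0$ for $s\neq q$ and evaluate $\langle v_s,v_s\rangle$ via a trigonometric sum (Equation \eqref{eq: sum of cosines}), whereas your pairing with a standard basis vector makes the inner product trivial. The one thing you rightly flag as needing care, the direction of the change of basis, is handled correctly: Equation \eqref{Relation between eigenvectors and standard basis} with $q=1$ does indeed give $e_1 = \frac{2}{l}\sum_s \sin(s\pi/l)\,v_s$.
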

\begin{proof}
    Corollary \ref{cor: Relation between p(k,N) and T} yields that 
    \begin{align*}
        p(k,N) &= \langle T^Ne_1 , e_{k+1} \rangle\\
        &= \left\langle \frac{2}{l}  \sum_{s=1}^{l-1} \sin\left( \frac{s\pi }{l} \right)T^N  v_s ,  \frac{2}{l}\sum_{i=1}^{l-1} \sin\left( \frac{s(k+1)\pi }{l} \right)v_s\right\rangle\tag{by Equation \ref{Relation between eigenvectors and standard basis}}\\
        &= \frac{4}{l^2} \left\langle \sum_{s=1}^{l-1} 2^N\cos^N\left(\frac{s\pi}{l}\right)\sin\left( \frac{s\pi }{l} \right)v_s , \sum_{s=1}^{l-1}\sin\left( \frac{s(k+1)\pi }{l} \right)v_s \right\rangle \tag{by Lemma \ref{lem: eigenvectors and eigenvalues of T}}
        \\
    &= \frac{4}{l^2} \sum_{s=1}^{l-1}\sum_{q=1}^{l-1} 2^N\cos^N\left(\frac{s\pi}{l}\right)\sin\left( \frac{s\pi }{l} \right)\sin\left( \frac{q(k+1)\pi }{l} \right)\langle v_s, v_q \rangle
    \end{align*}
    Since eigenvectors with different eigenvalues are orthogonal, we get that 
    \begin{align*}
        p(k,N) &= \frac{4}{l^2}  \sum_{s=1}^{l-1} 2^N\cos^N\left(\frac{s\pi}{l}\right)\sin\left( \frac{s\pi }{l} \right)\sin\left( \frac{s(k+1)\pi }{l} \right)\langle v_s, v_s \rangle
    \end{align*}
    For any $s \in \{1,2, \cdots, l-1\}$, Equation \ref{eq: sum of cosines} yields
    $$ \langle v_s, v_s\rangle = \sum_{q=1}^{l-1} \sin^2 \left(\frac{sq\pi}{l}\right) = \frac{1}{2}\sum_{q=1}^{l-1} \left(1- \cos \left(\frac{2sq\pi}{l}\right)\right)  = \frac{l}{2}.$$
    Therefore,
    $$ p(k,N) = \frac{2}{l}  \sum_{s=1}^{l-1} 2^N\cos^N\left(\frac{s\pi}{l}\right)\sin\left( \frac{s\pi }{l} \right)\sin\left( \frac{s(k+1)\pi }{l} \right) .$$
\end{proof}

The last proposition helps us to examine the asymptotic behaviour of $p(k,N).$ We will now state the asymptotic results which we can derive from Proposition \ref{prop: p(k,N) in terms of sine and cosines}.

\begin{cor}
    For any $N \in \N, k \in \{ 0,1,\cdots , l-2 \}$, we have $p(k,N) \leq 2^{N} \cos^N\left( \frac{\pi}{l} \right)$.
\end{cor}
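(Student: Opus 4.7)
The plan is to exploit the matrix reformulation $p(k,N) = \langle T^N e_1, e_{k+1}\rangle$ provided by Corollary \ref{cor: Relation between p(k,N) and T}, and bound this inner product using the spectral radius of the symmetric matrix $T$. Indeed, once $p(k,N)$ is written as a matrix entry, the problem reduces to a standard operator-norm estimate.

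First I would note that $T = (l-1;1,0,1)$ is real symmetric, so its Euclidean operator norm coincides with its spectral radius. By Lemma \ref{lem: eigenvectors and eigenvalues of T}, the eigenvalues of $T$ are $\lambda_s = 2\cos(s\pi/l)$ for $s \in \{1, \ldots, l-1\}$. Among these, the largest absolute value occurs at $s = 1$ (and $s = l-1$), both giving $|\lambda_s| = 2\cos(\pi/l)$: the map $s \mapsto s\pi/l$ sends $\{1, \ldots, l-1\}$ into $(0,\pi)$, and on this interval $|\cos|$ is maximized near the endpoints. Hence $\|T\| = 2\cos(\pi/l)$, and since $T$ is symmetric, $\|T^N\| \leq \|T\|^N = 2^N \cos^N(\pi/l)$.

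The conclusion then follows from a single application of Cauchy--Schwarz, using $\|e_1\| = \|e_{k+1}\| = 1$:
$$p(k,N) = \langle T^N e_1, e_{k+1}\rangle \leq \|T^N e_1\| \cdot \|e_{k+1}\| \leq \|T^N\| = 2^N \cos^N(\pi/l).$$
Dropping the absolute value on the left is justified because $T$ has nonnegative entries, so all entries of $T^N$ are nonnegative; equivalently, $p(k,N) \geq 0$ as a multiplicity in a direct-sum decomposition of tilting modules.

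There is essentially no hard step here: the combinatorial and algebraic content has been entirely absorbed into Corollary \ref{cor: Relation between p(k,N) and T} and Lemma \ref{lem: eigenvectors and eigenvalues of T}, so what remains is a routine spectral-norm estimate. The only minor verification is the elementary trigonometric claim that $\cos(\pi/l)$ dominates $|\cos(s\pi/l)|$ for every $s \in \{1, \ldots, l-1\}$, which is immediate from the monotonicity of $|\cos|$ on the two halves of $(0,\pi)$.
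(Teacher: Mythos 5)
Your proof is correct, but it takes a genuinely different route from the paper's. The paper proceeds from Proposition \ref{prop: p(k,N) in terms of sine and cosines}, which gives $p(k,N)$ as an explicit trigonometric sum; it then takes absolute values term-by-term, pulls out $\cos^N(\pi/l)$, applies Cauchy--Schwarz to the remaining sum of products of sines, and invokes the closed form $\sum_{s=1}^{l-1}\cos(sn\pi/l)$ from Equation \ref{eq: sum of cosines} to evaluate the resulting quadratic sums as $l/2$ each, obtaining the same final constant $2^N\cos^N(\pi/l)$. You instead bypass the explicit formula altogether: starting from the matrix reformulation $p(k,N)=\langle T^N e_1, e_{k+1}\rangle$ of Corollary \ref{cor: Relation between p(k,N) and T}, you observe that $T$ is real symmetric, identify its spectral radius $2\cos(\pi/l)$ from Lemma \ref{lem: eigenvectors and eigenvalues of T}, and conclude with the one-line operator-norm estimate $|\langle T^N e_1,e_{k+1}\rangle| \le \|T\|^N$, noting nonnegativity to drop the absolute value. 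The two arguments are of equal strength here, but yours is shorter and conceptually cleaner, trading the explicit trigonometric sum for a general spectral-norm fact; the paper's route has the minor advantage that the same trigonometric manipulations are reused in the subsequent corollary on the second-order asymptotics, where a bare norm bound would not suffice.
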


\begin{proof}
    Since $\cos\left( \frac{\pi}{l} \right) \geq \left| \cos\left( \frac{s\pi}{l} \right) \right|$ for any $s \in \{1,2,\cdots, l-1\},$ Proposition \ref{prop: p(k,N) in terms of sine and cosines} yields
    \begin{align*}
        p(k,N) &\leq \frac{2}{l}  \sum_{s=1}^{l-1} \left|2^N\cos^N\left(\frac{\pi}{l}\right)\sin\left( \frac{s\pi }{l} \right)\sin\left( \frac{s(k+1)\pi }{l} \right)\right| \\
        \implies p(k,N) &\leq \frac{2^{N+1} \cos^N\left(\frac{\pi}{l}\right)}{l}  \sum_{s=1}^{l-1} \left|\sin\left( \frac{s\pi }{l} \right)\sin\left( \frac{s(k+1)\pi }{l} \right)\right|\\
        \implies p(k,N) &\leq \frac{2^{N+1} \cos^N\left(\frac{\pi}{l}\right)}{l}  \left(\sum_{s=1}^{l-1} \sin^2\left( \frac{s\pi }{l} \right)\right)^{\frac{1}{2}}\left(\sum_{s=1}^{l-1} \sin^2\left( \frac{s(k+1)\pi }{l} \right)\right)^{\frac{1}{2}} \tag{by Cauchy-Schwarz inequality}\\
         \implies p(k,N) &\leq \frac{2^{N}}{l} \cos^N\left( \frac{\pi}{l} \right) \left[\sum_{s=1}^{l-1} \left( 1 - \cos\left( \frac{2s\pi }{l}\right) \right)\right]^{\frac{1}{2}}\left[\sum_{s=1}^{l-1}\left( 1- \cos\left( \frac{2s(k+1)\pi }{l} \right)\right)\right]^{\frac{1}{2}}\\
        \implies p(k,N) &\leq 2^{N} \cos^N\left( \frac{\pi}{l} \right)\tag{by Equation \ref{eq: sum of cosines} }
    \end{align*}
\end{proof}

\begin{cor}
    For any $N \in \N$ and $k \in \{0,1,\cdots, l-2\},$ $$ \left\lvert p(k,N) - \frac{1 + (-1)^{k+N}}{l}\cdot 2^{N+1} \cos^N\left( \frac{\pi}{l} \right)\sin\left( \frac{\pi}{l} \right)\sin\left( \frac{\pi (k+1)}{l} \right) \right\rvert \leq 2^N\left|\cos^N\left( \frac{2\pi}{l} \right)\right| .$$ Moreover $$p(k,N) =  \frac{1 + (-1)^{k+N}}{l}\cdot 2^{N+1} \cos^N\left( \frac{\pi}{l} \right)\sin\left( \frac{\pi}{l} \right)\sin\left( \frac{\pi (k+1)}{l} \right) + O\left(2^N \left|\cos^N \left( \frac{2\pi}{l} \right)\right|\right).$$
\end{cor}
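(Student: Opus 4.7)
The plan is to start from the closed-form expression obtained in Proposition \ref{prop: p(k,N) in terms of sine and cosines},
$$p(k,N) = \frac{2^{N+1}}{l}\sum_{s=1}^{l-1} \cos^N\!\left(\frac{s\pi}{l}\right)\sin\!\left(\frac{s\pi}{l}\right)\sin\!\left(\frac{s(k+1)\pi}{l}\right),$$
and split the sum into the dominant $s=1,\,l-1$ pair plus a tail $s\in\{2,\dots,l-2\}$, handling the tail by a uniform bound on the cosine factor together with Cauchy--Schwarz.

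First I would exploit the $s\leftrightarrow l-s$ symmetry. Writing $a_s := \cos^N(s\pi/l)\sin(s\pi/l)\sin(s(k+1)\pi/l)$, the elementary identities $\cos((l-s)\pi/l)=-\cos(s\pi/l)$, $\sin((l-s)\pi/l)=\sin(s\pi/l)$, and $\sin((l-s)(k+1)\pi/l)=(-1)^k\sin(s(k+1)\pi/l)$ combine to give $a_{l-s}=(-1)^{N+k}a_s$. Since $l$ is odd, the indices pair up without a middle term, and in particular $a_1+a_{l-1}=(1+(-1)^{N+k})\cos^N(\pi/l)\sin(\pi/l)\sin((k+1)\pi/l)$. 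Multiplying by $2^{N+1}/l$ reproduces exactly the leading expression in the statement, so that
$$p(k,N) - \frac{1+(-1)^{k+N}}{l}\,2^{N+1}\cos^N\!\left(\frac{\pi}{l}\right)\sin\!\left(\frac{\pi}{l}\right)\sin\!\left(\frac{(k+1)\pi}{l}\right) \;=\; \frac{2^{N+1}}{l}\sum_{s=2}^{l-2} a_s.$$

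Next I would bound the tail. For $s\in\{2,\dots,l-2\}$ the map $s\mapsto|\cos(s\pi/l)|$ is maximized at the endpoints, both giving the value $|\cos(2\pi/l)|$, so $|\cos(s\pi/l)|^N\le|\cos(2\pi/l)|^N$ uniformly. Applying Cauchy--Schwarz yields
$$\sum_{s=2}^{l-2}\bigl|\sin(s\pi/l)\sin(s(k+1)\pi/l)\bigr| \;\le\; \left(\sum_{s=2}^{l-2}\sin^2\!\tfrac{s\pi}{l}\right)^{\!1/2}\!\left(\sum_{s=2}^{l-2}\sin^2\!\tfrac{s(k+1)\pi}{l}\right)^{\!1/2}.$$
Each factor is controlled using the identity $\sum_{s=1}^{l-1}\sin^2(sn\pi/l)=l/2$ for $n\in\{1,\dots,l-1\}$, which was already established in the proof of Proposition \ref{prop: p(k,N) in terms of sine and cosines} via Equation \ref{eq: sum of cosines}. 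Subtracting the $s=1$ and $s=l-1$ contributions gives $\sum_{s=2}^{l-2}\sin^2(sn\pi/l)=l/2-2\sin^2(n\pi/l)\le l/2$, so the double sum is at most $l/2$. Combining everything, the tail is bounded by $\frac{2^{N+1}}{l}|\cos(2\pi/l)|^N\cdot\frac{l}{2}=2^N|\cos(2\pi/l)|^N$, which is the desired inequality. The second assertion is then immediate, and the degenerate case $l=3$ is trivial since $\{2,\dots,l-2\}$ is empty and the tail vanishes identically. The main subtlety is carefully tracking the three sign contributions in the $s\leftrightarrow l-s$ involution to recover exactly the factor $1+(-1)^{k+N}$; once this pairing is in hand, the remaining estimates are routine.
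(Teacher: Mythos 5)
Your proposal is correct and follows essentially the same route as the paper: both split the sum from Proposition \ref{prop: p(k,N) in terms of sine and cosines} into the $s=1,\,l-1$ pair and the tail $s\in\{2,\dots,l-2\}$, extract the leading factor $1+(-1)^{k+N}$ via the $s\leftrightarrow l-s$ involution, bound the tail's cosine factor uniformly by $|\cos(2\pi/l)|^N$, and finish with Cauchy--Schwarz and the identity $\sum_{s=1}^{l-1}\sin^2(sn\pi/l)=l/2$. Your treatment of $l=3$ (observing the tail is vacuously empty) is a slightly cleaner phrasing of the paper's explicit special-case check, and you make the sign bookkeeping in the pairing more explicit, but the argument is the same.
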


\begin{proof}
   Recall that in Section \ref{Quantum group}, $q$ is assumed to be an odd root of unity. Thus $l \neq 2.$ If $l = 3$, then Proposition \ref{prop: p(k,N) in terms of sine and cosines} yields 
   \begin{align*}
        p(k,N) &=  \frac{ 2^{N+1}}{3}\cdot \cos^N\left( \frac{\pi}{3} \right)\sin\left( \frac{\pi}{3} \right)\sin\left( \frac{\pi (k+1)}{3} \right) + \frac{ 2^{N+1}}{3}\cdot \cos^N\left( \frac{2\pi}{3} \right)\sin\left( \frac{2\pi}{3} \right)\sin\left( \frac{2\pi (k+1)}{3} \right)\\ 
        &= \frac{1 + (-1)^{k+N}}{3}\cdot 2^{N+1} \cos^N\left( \frac{\pi}{3} \right)\sin\left( \frac{\pi}{3} \right)\sin\left( \frac{\pi (k+1)}{3} \right). \\
   \end{align*}
   Hence the Corollary holds. Now assume $l>3$. Thus Proposition \ref{prop: p(k,N) in terms of sine and cosines} yields \begin{align*}
       p(k,N) &= \frac{2^{N+1}}{l}\sum_{s=1}^{l-1} \cos^N\left( \frac{s\pi}{l}\right)\sin\left(\frac{s\pi}{l} \right)\sin\left(\frac{s(k+1)\pi}{l}\right)\\
       &= \frac{2^{N+1}}{l}\cdot \left( 1+ (-1)^{k+N} \right)\cos^N\left( \frac{\pi}{l} \right)\sin\left( \frac{\pi}{l} \right)\sin\left( \frac{\pi (k+1)}{l} \right) + \frac{2^{N+1}}{l}\sum_{s=2}^{l-2} \cos^N\left( \frac{s\pi}{l}\right)\sin\left(\frac{s\pi}{l} \right)\sin\left(\frac{s(k+1)\pi}{l}\right)
   \end{align*}

   $$\hspace*{-8mm} \implies \left| p(k,N)- \frac{\left( 1+ (-1)^{k+N} \right)}{l}\cdot 2^{N+1}\cos^N\left( \frac{\pi}{l} \right)\sin\left( \frac{\pi}{l} \right)\sin\left( \frac{\pi (k+1)}{l} \right) \right| \leq \frac{2^{N+1}}{l}\sum_{s=2}^{l-2} \left|\cos^N\left( \frac{s\pi}{l}\right)\sin\left(\frac{s\pi}{l} \right)\sin\left(\frac{s(k+1)\pi}{l}\right)\right|$$
   Note that $ \left| \cos\left( \frac{2\pi}{l} \right) \right| \geq \left| \cos\left( \frac{s\pi}{l} \right) \right| $ for any $s \in \{2,3\cdots, l-2\}$. Hence,
   \begin{equation}
   \left| p(k,N)- \frac{\left( 1+ (-1)^{k+N} \right)}{l}\cdot 2^{N+1}\cos^N\left( \frac{\pi}{l} \right)\sin\left( \frac{\pi}{l} \right)\sin\left( \frac{\pi (k+1)}{l} \right) \right| \leq \frac{2^{N+1}}{l} \left|\cos^N\left( \frac{2\pi}{l}\right)\right| \sum_{s=2}^{l-2} \left|\sin\left(\frac{s\pi}{l} \right)\sin\left(\frac{s(k+1)\pi}{l}\right)\right|. \label{eq: final corollary}
   \end{equation}
Thus Cauchy-Schwarz inequality yields,
\begin{align*}
    \sum_{s=2}^{l-2} \left|\sin\left(\frac{s\pi}{l} \right)\sin\left(\frac{s(k+1)\pi}{l}\right)\right| &\leq \left(\sum_{s=2}^{l-2} \sin^2\left( \frac{s\pi}{l} \right) \right)^{\frac{1}{2}} \left(\sum_{s=2}^{l-2} \sin^2\left( \frac{s(k+1)\pi}{l} \right) \right)^{\frac{1}{2}}\\
    \implies \sum_{s=2}^{l-2} \left|\sin\left(\frac{s\pi}{l} \right)\sin\left(\frac{s(k+1)\pi}{l}\right)\right| &\leq \left(\sum_{s=1}^{l-1} \sin^2\left( \frac{s\pi}{l} \right) \right)^{\frac{1}{2}} \left(\sum_{s=1}^{l-1} \sin^2\left( \frac{s(k+1)\pi}{l} \right) \right)^{\frac{1}{2}}\\
    \implies \sum_{s=2}^{l-2} \left|\sin\left(\frac{s\pi}{l} \right)\sin\left(\frac{s(k+1)\pi}{l}\right)\right| &\leq \frac{1}{2} \left(\sum_{s=1}^{l-1} 1-\cos\left( \frac{2s\pi}{l} \right) \right)^{\frac{1}{2}} \left(\sum_{s=1}^{l-1} 1-\cos\left( \frac{2s(k+1)\pi}{l} \right) \right)^{\frac{1}{2}}\\
    \implies \sum_{s=2}^{l-2} \left|\sin\left(\frac{s\pi}{l} \right)\sin\left(\frac{s(k+1)\pi}{l}\right)\right| &\leq \frac{l}{2}\tag{by Equation \ref{eq: sum of cosines}}
\end{align*}
Therefore Equation \ref{eq: final corollary} yields 
$$\left\lvert p(k,N) - \frac{1 + (-1)^{k+N}}{l}\cdot 2^{N+1} \cos^N\left( \frac{\pi}{l} \right)\sin\left( \frac{\pi}{l} \right)\sin\left( \frac{\pi (k+1)}{l} \right) \right\rvert \leq 2^N\left|\cos^N\left( \frac{2\pi}{l} \right)\right|.$$
\end{proof}

We can see this asymptotic behaviour in the following plot.

\vspace{5mm}

\includegraphics[scale=0.5]{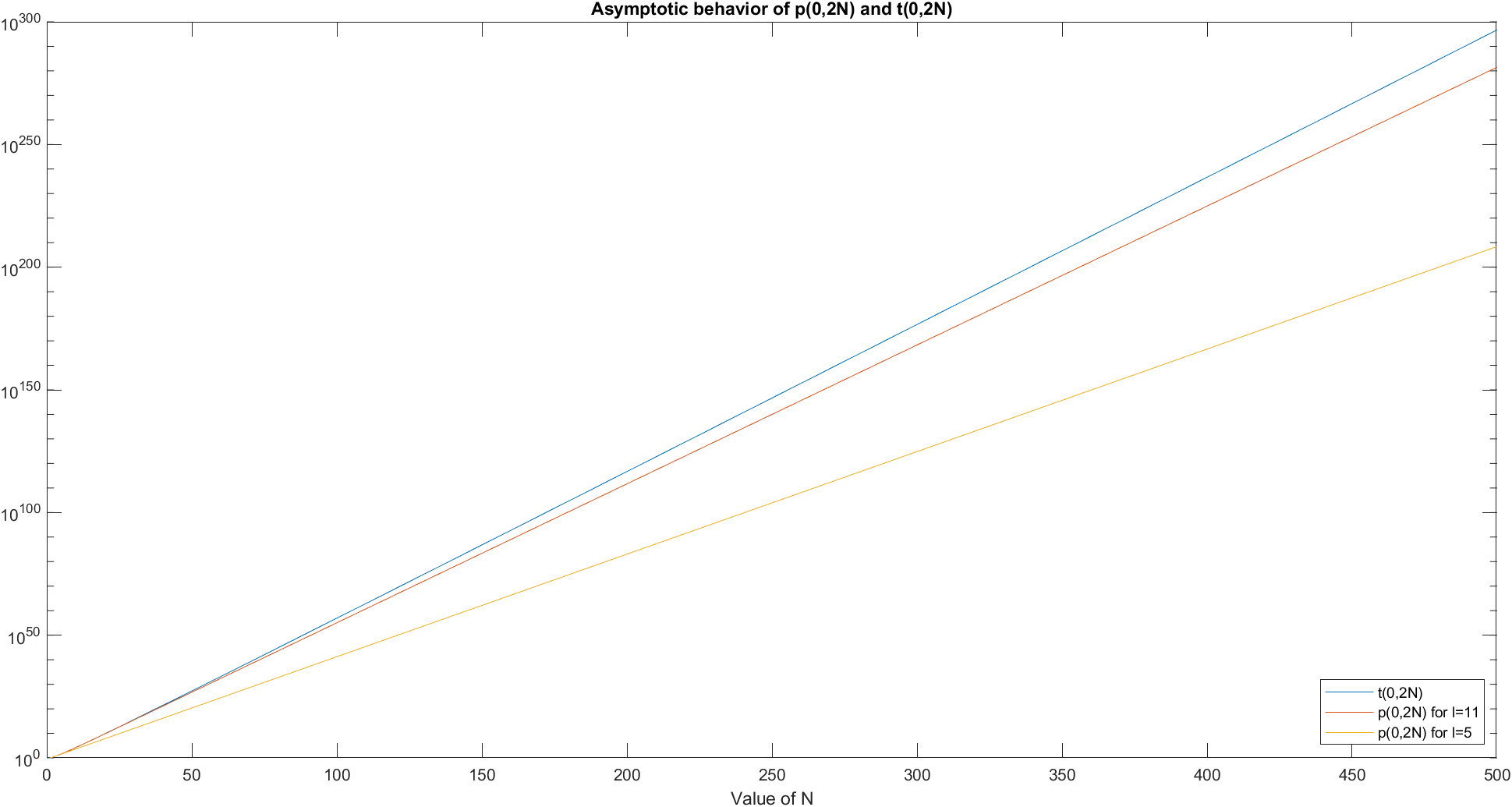}

\bibliography{Bibliography}
\end{document}